\newtheorem{theorem}{Theorem}[section]
\newtheorem{algorithm}[theorem]{Algorithm}
\newtheorem{lemma}[theorem]{Lemma}
\newtheorem{proposition}[theorem]{Proposition}
\newtheorem{remark}[theorem]{Remark}
\newcommand{\GG}{\mathbb{G}}
\newcommand{\RR}{\mathbb{R}}
\newcommand{\Mc}{\mathcal{M}}
\DeclareMathOperator{\rank}{\mathrm{rank}}
\begin{document}

\title{{A new splitting algorithm for dynamical low-rank approximation motivated by the fibre bundle structure of matrix manifolds}}
\author{M. Billaud-Friess, A. Falc\'o \& A. Nouy}
%{Centrale Nantes, LMJL, UMR CNRS 6629,  1 rue de la No\"e, BP 92101, 44321 Nantes Cedex 3, France. } \$$5pt] e-mail: \texttt{marie.billaud-friess@ec-nantes.fr}\\
\date{\today}
\maketitle

\begin{abstract}
{In this paper, we propose a new splitting algorithm for dynamical low-rank approximation motivated by the fibre bundle structure of the set of fixed rank matrices.  
We first introduce a geometric description of the set of fixed rank matrices which relies on a natural parametrization of matrices.} More precisely, it is endowed with the structure of analytic principal bundle, with an explicit description of local charts. For matrix differential equations, we introduce a first order numerical integrator working in local coordinates. The resulting {algorithm} can be interpreted as a particular splitting of the projection operator onto the tangent space of the low-rank matrix manifold. {It is proven to be exact in some particular case. Numerical experiments confirm this result and illustrate the robustness of the proposed algorithm.}
\end{abstract}

{\small \noindent\textbf{Keywords:} Dynamical low-rank approximation, matrix manifold, matrix differential equation, splitting integrator }\\

{\small \noindent\textbf{2010 AMS Subject Classifications:} 15A23, 65F30, 65L05, 65L20}\\

%%%%%%%%%%%%%%%%%%%%%%%%%%%%%%%%%%%%%%%%%%%%%%%%%%
% INTRO %%%%%%%%%%%%%%%%%%%%%%%%%%%%%%%%%%%%%%%%%%%%%
%%%%%%%%%%%%%%%%%%%%%%%%%%%%%%%%%%%%%%%%%%%%%%%%%%

\section{Introduction}
High-dimensional dynamical systems arise in variety of applications as quantum chemistry, physics, finance and uncertainty quantification, to name a few. Discretization of such problems with traditional numerical methods often leads to complex numerical problems usually untractable, in particular if they depend on parameters.  Model Order Reduction (MOR) methods aim at reducing the complexity of such problems by projecting the solution onto low-dimensional manifolds. In this paper, we particularly focus on dynamical low-rank methods. Such methods have been considered for low-rank approximation of time-dependent matrices  \cite{Koch2007,Ceruti2020,Falco2018} with possible symmetry properties \cite{Ceruti2019}, and tensors \cite{Khoromskij2012,Lubich2014,Kieri2016,Kieri2019} {or more recently extended to parabolic problems \cite{Bachmayr2020}. In the context of parameter-dependent partial differential equations, let us mention also dynamical orthogonal approximation, in a Riemannian framework \cite{Sapsis2009,Cheng2013,Musharbash2015,Feppon2018a,Feppon2018b,Feppon2019},} and also dynamical reduced basis method \cite{Billaud2017}.\\

Here, we focus on low-rank approximation of  time-dependent matrices $A(t) \in \RR^{n\times m}$. Introducing $\dot A(t) = \frac{d}{dt}A(t)$ the time derivative, the matrix $A(t)$ is defined as the solution of the following Ordinary Differential Equation (ODE) 
\begin{equation}
\dot A(t) = F(A(t),t), \qquad A(0) = A^0,
\label{eq:dynsys}
\end{equation}
given $A^0 \in \RR^{n\times m}$ and $F : \RR^{n\times m} \times [0,T]\to \RR^{n\times m}$. 
Dynamical low-rank methods aim at approximating at each instant $t$ the matrix $A(t)$  by the matrix $Z(t)$ which belongs to the nonlinear manifold of fixed rank matrices
$$
\Mc_r({\RR^{n\times m}})= \{Z \in \RR^{n\times m} : \rank(Z)=r\},
$$
where $r \ll \min(n,m)$ stands for  the rank.
When $A(t)$ is known, $Z(t)$ can be defined as the best rank-$r$ approximation solution of
\begin{equation}
 Z(t) = \arg \min_{W \in \Mc_r({\RR^{n\times m}})} \| A(t) -W\|,
\label{eq:best-approx}
\end{equation}
with $\|\cdot\|$ the Frobenius norm. In that case, $Z$  is obtained through a Singular Value Decomposition (SVD) of $A(t)$  for each instant $t$. Nevertheless, as $A$ is implicitly given by the dynamical system \eqref{eq:dynsys}, it is more relevant to introduce low-rank approximation using $\dot A$. To that goal, the approximation $Z$  is classically obtained through its derivative $\dot Z$ which satisfies the Dirac-Frenkel variational principle
\begin{equation}
\dot Z(t) = \arg \min_{\delta W \in T_{Z(t)}\Mc_r({\RR^{n\times m}})} \| \delta W  -F(Z(t),t)\|,
\label{eq:DF}
\end{equation}
given $Z(0) = Z^0 \in \Mc_r(\RR^{n\times m})$ the best rank-$r$ approximation of $A(0)$ and $T_{Z(t)}\Mc_r({\RR^{n\times m}})$ the tangent space to $\Mc_r(\RR^{n\times m})$ at $Z(t)$. Equivalently,  $\dot Z(t)$ corresponds to the orthogonal projection of $F(Z(t),t)$ (see Figure \ref{fig:proj}) on the solution dependent tangent space, i.e.
\begin{equation}
\dot Z(t)  = P_{T_Z(t)} F(Z(t),t), \qquad Z(0) = Z^0,\\
\label{eq:proj}
\end{equation}
where $P_{ T_{Z}}$ denotes the projection onto $T_{Z(t)}\Mc_r({\RR^{n\times m}})$. \\
\begin{figure}[H]
\centering
\includegraphics[scale =0.6]{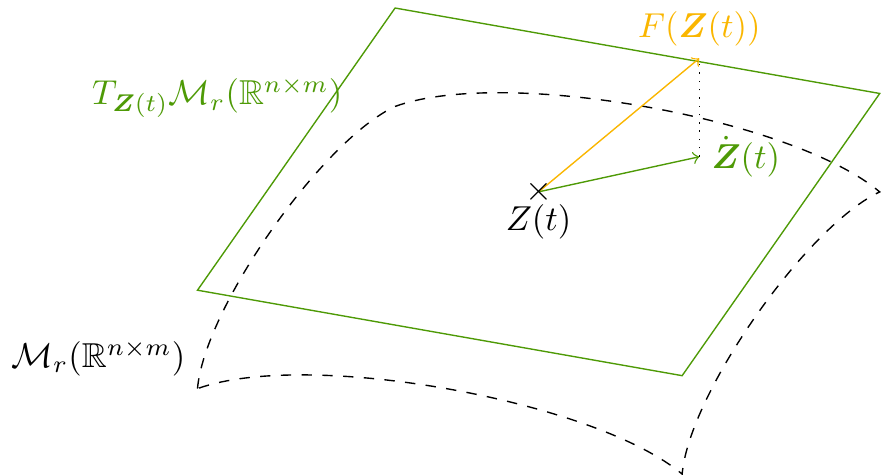}
\caption{Orthogonal projection on the tangent space $T_{Z(t)}\Mc_r({\RR^{n\times m}})$. \label{fig:proj}}
\end{figure}

In view of MOR, the goal of low-rank methods is to approximate the solution $A(t)$ of Equation \eqref{eq:dynsys} with $Z(t)$ solution of Equation \eqref{eq:proj} which is cheaper to compute. However, in practice additional difficulties appear for the numerical integration of Equation \eqref{eq:proj}. \\
The first difficulty relies on the proper description of the manifold of fixed rank matrices $\Mc_r(\RR^{n\times m})$. 
In practice, a way to compute the rank-$r$ matrix $Z(t)$ is done through its parametrization
\begin{equation}
Z(t) = U(t) G(t) V(t)^T,
\label{eq:param}
\end{equation} 
with
$U(t) \in \RR^{n\times r}, V(t) \in \RR^{m\times r}$ and $G(t) \in \RR^{r\times r}$.
% ne pas mettre dans GL_r car en fait c'est tout ce qui fait la limite des approches !!  necessaire pour les inversions mais on sens qu'il y a une petite inconsistance plus tard sur 
Such a parametrization of the matrix $Z(t)$ is not unique. A way to dodge this undesirable property is to properly define the tangent space $T_{Z(t)}\Mc_r({\RR^{n\times m}})$. Given $Z(0)= U(0)G(0)V(0)^T$, the matrix $Z(t)$ admits a unique decomposition of the form \eqref{eq:param} when imposing the so-called {\it gauge conditions} on  $U,V$ (see \cite[Proposition 2.1]{Koch2007}). In addition, the system \eqref{eq:proj} results in a system ODEs driving the evolution of the parameters $U,G,V$.\\
The second difficulty appears when numerical integration is performed for solving the resulting system of ODEs governing the evolution of  parameters $U,V$ and $G$. 
Indeed in presence of small singular values for $Z(t)$, the matrix $G(t)$ may be ill-conditioned. As consequence, classical integration schemes may be unstable  (see e.g. \cite[Section 2.1]{Kieri2016}). Moreover, in case of {\it overapproximation}, i.e. when the approximation $Z(t)$ has a rank $r$ greater than the rank of the exact solution $A(t)$, these method fails since $G(t)$ becomes singular. A possible way to address this issue is regularize $G$ such that it remains invertible. {Nevertheless, it has the drawback to modify the problem (then the approximate solution) and does not prevent ill-conditioning of $G(t)$. }
In \cite{Lubich2014}  an explicit projection-splitting integrator is proposed to deal with numerical integration of \eqref{eq:proj}.  It is based on a Lie-Trotter splitting of the projection operator $P_{T_{Z(t)}}$. 
In addition to its simplicity, it has the advantage to remains robust in case of small singular values and especially for overapproximation as it avoids the inversion of $G(t)$.
The resulting method is fully explicit and first order, extension to second order via Strang splitting scheme could be considered. 
{Variants of this algorithm have been proposed in \cite{Ceruti2019} to integrate some symmetry properties. More recently in  \cite{Ceruti2020}\footnote{We have been aware of this reference while revising the present paper.}, the authors derive a different robust algorithm for dynamical low-rank approximation.} When interested in higher order approximation, projection based methods \cite{Kieri2019}, in the lines of Riemaniann optimization, combined to explicit Runge-Kutta schemes could be considered. Such methods work as follows. 
Perform one step of the numerical scheme leaving the manifold, and then project on the manifold by means of retraction. The latter step is usually performed using a $r$-terms truncated SVD.\\ 

In \cite{Billaud2017}, the authors give a different geometric description of the fixed-rank matrix manifold and the associated tangent space. This description combines a natural definition of the neighborhood of $Z$ together with explicit description of local charts such that the set $\Mc_r(\RR^{n\times m})$ is endowed with the structure of analytic principal bundle \cite[Theorem 4.1]{Billaud2017}.  
Moreover, it ensures that any matrix in the neighborhood of a matrix $Z$ (including itself) admits a unique representation in the form $UGV^T$. 
The main contribution of this paper is twofold. First, we revisit dynamical low-rank approximation by using the geometric description of the matrix manifold given in \cite{Billaud2017}. The resulting system of ODEs on the parameters is shown to be related to the one obtained in \cite{Koch2007} but with no need of gauge conditions. Secondly, {relying on this geometric description} of $\Mc_r(\RR^{n\times m})$, we derive a first order numerical integrator in local coordinates for solving \eqref{eq:proj} that can be interpreted as a splitting integrator. It is proven to coincide with   the so-called {\it KSL splitting algorithm} introduced in \cite{Lubich2014}  in the particular case where the flux $F$ only depends on $t$. \\

The outline of the paper is as follows. We detail in Section \ref{sec:geom-description}  the proposed geometric description of the manifold of rank-$r$ matrices.
%together with its tangent spaces which relies on a natural parametrization
In Section \ref{sec:algos} we describe a new splitting algorithm relying on the proposed geometric description 
%and include some comparison to the KSL splitting integrator of \cite{Lubich2014}. 
Finally, in Section \ref{sec:num_results}, we confront the proposed splitting algorithm to KSL splitting integrator  \cite{Lubich2014} on several numerical test cases.

\section{Dynamical low-rank approximation: a geometric approach} \label{sec:geom-description}

Dynamical low-rank approximation consists in approximating at each time $t$ the matrix $A(t)$ by a matrix $Z(t) \in \Mc_r(\RR^{n\times m})$  that can be represented (in non-unique way) by means of the factorization 
$$
Z = U G V^T,
$$
with $U \in \Mc_r(\RR^{n\times r}), V \in \Mc_r(\RR^{m\times r})$ and $G \in \mathrm{GL}_r$ {with $ \mathrm{GL}_r$ the Lie group of $r\times r $ invertible matrices}.  We present in Section \ref{sec:geom-description1} a geometric description of $ \Mc_r(\RR^{n\times m})$. We restrict the presentation to essential  elements of geometry required thereafter. The interested reader could consult the original paper \cite{Billaud2017} for further details. In Section \ref{subsec:dlr} we discuss the interest of the proposed description for dynamical low-rank approximation. Finally, we draw  the link with the geometric description proposed in \cite{Koch2007} in Section \ref{sec:classical}.

\subsection{Chart based geometric description of $\Mc_r(\RR^{n\times m})$}\label{sec:geom-description1}

Let $Z = UGV^T \in \Mc_r(\RR^{n\times m})$. We consider $U_\bot \in \Mc_{n-r}(\mathbb{R}^{n\times (n-r)}), V_\bot \in \Mc_{m-r}(\mathbb{R}^{n\times (m-r)}),$ the matrices such that 
$U_\perp^T U = 0$ and $V_\perp^T V = 0$. The {\it neighborhood} ${\cal U}_{Z}$ of $Z$ in $\Mc_r(\RR^{n\times m})$  is defined as the set 
$$
{\cal U}_{Z} = \{(U+U_\perp X)H(V+V_\bot Y)^T :  (X,Y,H) \in \RR^{(n-r)\times r}\times \RR^{(m-r)\times r} \times  \mathrm{GL}_r\}.
$$
%where $ \{(U+U_\perp X) : X \in \RR^{(n-r)\times r} \}$ and $ \{(V+V_\perp Y) : X \in\RR^{(m-r)\times r}  \}$ corresponds to  the affine cross section associated to $U,V$ respectively. 
\begin{figure}[H]
\centering
\includegraphics[scale=1]{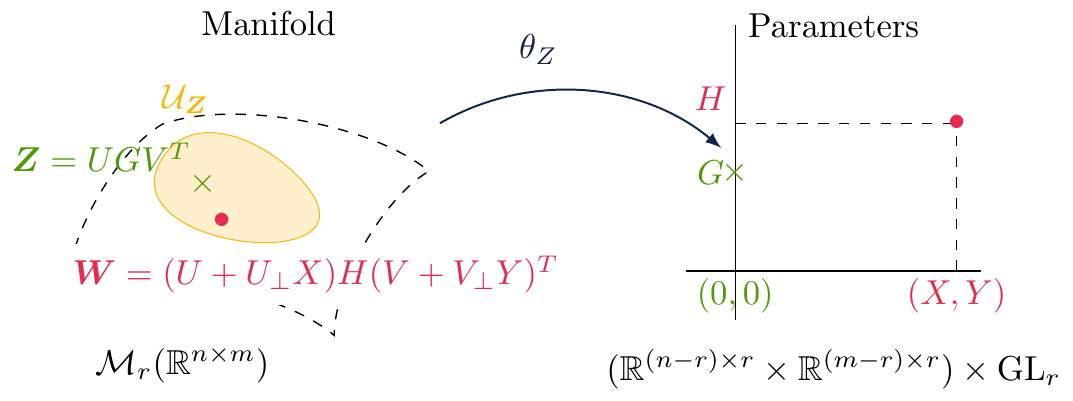}
\caption{Representation of the local chart $\theta_Z$ that associates to $W = (U + U_\perp X)H(V + V_\perp Y )^T$ in  ${\cal U}_{Z} \subset\Mc_r(\RR^{n\times m})$ the parameters $(X,Y,H) \in \RR^{(n-r)\times r}\times \RR^{(m-r)\times r} \times  \mathrm{GL}_r$ \label{fig:chartneigh}.}
\end{figure}

We associate to the neighborhood ${\cal U}_Z$ of $Z$ the {\it local chart} $\theta_Z : {\cal U}_Z \to \RR^{(n-r)\times r}\times \RR^{(m-r)\times r} \times \mathrm{GL}_r$ (see Figure \ref{fig:chartneigh})  which is given by
$$
\theta_Z(W) = (U_\bot^+W(V^+)^T(U^+W(V^+)^T)^{-1},V_\bot^+W^T(U^+)^T(V^+W^T(U^+)^T)^{-1},U^+W(V^+)^T)
$$
for any $W\in \mathcal{U}_Z$. Here $U^+$ and $V^+ $ stand for the Moore-Penrose pseudo-inverses \footnote{ For any $A \in \RR^{n \times m}$, the Moore-Penrose pseudo inverse is given by $A = (A^TA)^{-1} A^T.$} of $U$ and $V$ respectively. This means that any matrix $W$ belonging to the neighborhood ${\cal U}_Z$ admits a unique parametrization 
$$W =  \theta_Z^{-1}(X,Y,H),$$ 
with parameters $(X,Y,H) \in \mathbb{R}^{(n-r)\times r} \times \mathbb{R}^{(m-r)\times r} \times \mathrm{GL}_r$ and where the  map $\theta_Z^{-1}$ is defined by
$$\theta_Z^{-1}(X,Y,H) = (U+U_{\bot}X)H(V+V_{\bot}Y)^T.$$
In this description, the parameters are not longer $U,V,G$ but $X,Y,H$. Note that $\theta_Z^{-1}(0,0,G) = Z$.\\
  
Such geometric description confers the set $\Mc_r(\RR^{n \times m})$  the structure of an analytic manifold and of a principal bundle \cite[\S 4]{Billaud2017}.

 \begin{proposition} \label{prop:manifoldprop}
The set of fixed rank matrices $\Mc_r(\RR^{n\times m})$  equipped  with the atlas $ {\cal A}_{n,m,r} = \{({\cal U}_Z, \theta_Z) : Z \in \Mc_r(\RR^{n\times m}) \} $ is an analytic $r(n+m-r)$-dimensional manifold modelled on $\RR^{(n-r)\times r}\times \RR^{(m-r)\times r} \times \RR^{r\times r}$. Moreover $\Mc_r(\RR^{n\times m})$  is an analytic principal bundle with typical fiber $ \mathrm{GL}_r$ and base\footnote{ Here $\GG_r(\RR^p) = \{V_r \subset \RR^p : \dim(V_r)=r\}$ denotes the Grassmann manifold.} $\GG_r(\RR^n)\times \GG_r(\RR^m)$. 
\end{proposition}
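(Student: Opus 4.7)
The strategy is to handle the two assertions in turn: first that $\mathcal{A}_{n,m,r}$ is an analytic atlas on $\Mc_r(\RR^{n\times m})$, and second that the resulting manifold fibers analytically over $\GG_r(\RR^n)\times\GG_r(\RR^m)$ with typical fiber $\mathrm{GL}_r$. The chart $\theta_Z$ is designed so that the $H$-component will play the role of the fiber coordinate while $(X,Y)$ parametrize the base; establishing the chart properties therefore already essentially produces the principal bundle trivializations.

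For the atlas, I would first verify that $\theta_Z$ and the explicit map $\theta_Z^{-1}$ are mutually inverse bijections $\Uc_Z \leftrightarrow \RR^{(n-r)\times r}\times\RR^{(m-r)\times r}\times\mathrm{GL}_r$. The essential algebraic identities are $U^+U=I_r$, $U^+U_\perp=0$, $U_\perp^+U=0$, $U_\perp^+U_\perp=I_{n-r}$ and their analogues for $V$; substituting $W=(U+U_\perp X)H(V+V_\perp Y)^T$ into the formula for $\theta_Z$ then collapses cleanly to $(X,Y,H)$, while the converse inclusion holds by the very definition of $\Uc_Z$. That $\theta_Z^{-1}(X,Y,H)$ has rank exactly $r$ follows from $[U,U_\perp]\in\mathrm{GL}_n$, $[V,V_\perp]\in\mathrm{GL}_m$, and $H\in\mathrm{GL}_r$. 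The atlas covers $\Mc_r(\RR^{n\times m})$ because $Z\in\Uc_Z$ for every $Z$, and the dimension count is $r(n-r)+r(m-r)+r^2=r(n+m-r)$.

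Analyticity of the transition maps $\theta_{Z_2}\circ\theta_{Z_1}^{-1}$ on the overlap $\theta_{Z_1}(\Uc_{Z_1}\cap\Uc_{Z_2})$ is then straightforward: $W=\theta_{Z_1}^{-1}(X_1,Y_1,H_1)$ depends polynomially on $(X_1,Y_1,H_1)$, and applying $\theta_{Z_2}$ amounts to matrix products with the fixed pseudo-inverses of $U_2,V_2,U_{2\perp},V_{2\perp}$ together with a single inversion of $U_2^+ W(V_2^+)^T$, whose argument is invertible precisely because the point lies in $\Uc_{Z_2}$. Since matrix inversion is analytic on $\mathrm{GL}_r$, the transitions are analytic.

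For the principal bundle part, I would take the projection $\pi: Z=U'G'V'^T\mapsto(\spann(U'),\spann(V'))\in\GG_r(\RR^n)\times\GG_r(\RR^m)$, which is well defined since column and row spaces do not depend on the chosen representation. The chart $\theta_Z$ then splits as a local trivialization: $(X,Y)$ determines and is determined by $(\spann(U+U_\perp X),\spann(V+V_\perp Y))=\pi(W)$---these are precisely the standard affine charts on each Grassmannian relative to the splittings $\RR^n=\spann(U)\oplus\spann(U_\perp)$ and $\RR^m=\spann(V)\oplus\spann(V_\perp)$---while $H$ ranges freely over $\mathrm{GL}_r$ and is the fiber coordinate. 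The main obstacle I expect lies in this final step: explicitly tracking how the $H$-coordinate transforms under a chart change $\theta_{Z_1}\mapsto\theta_{Z_2}$ and confirming that it does so by multiplication with an analytic $\mathrm{GL}_r$-valued function of $(X_1,Y_1)$ alone, with no residual $H_1$-dependence. This is the cocycle condition for an analytic principal $\mathrm{GL}_r$-bundle; it is algebraic bookkeeping with the four pseudo-inverses, but it is where the specific geometry encoded in the chart construction is really exploited.
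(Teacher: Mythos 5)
The first thing to note is that the paper itself contains no proof of this proposition: it is imported from the companion work \cite{Billaud2017} (cited as \S 4, Theorem 4.1 there), so there is no in-paper argument to compare against. Your outline follows essentially the route of that reference: the identities $U^+U=I_r$, $U^+U_\perp=0$, $U_\perp^+U=0$ and their $V$-analogues do collapse $\theta_Z(\theta_Z^{-1}(X,Y,H))$ to $(X,Y,H)$; the rank statement follows from $[U\;U_\perp]\in\mathrm{GL}_n$; the dimension count is right; and analyticity of the transitions reduces to analyticity of matrix inversion on $\mathrm{GL}_r$. One point you pass over too quickly: to conclude that $\theta_{Z_1}(\Uc_{Z_1}\cap\Uc_{Z_2})$ is open (and that the single inversion in $\theta_{Z_2}$ is legitimate on exactly that set) you need the characterization that a rank-$r$ matrix $W$ belongs to $\Uc_{Z_2}$ if and only if $U_2^+W(V_2^+)^T$ is invertible, i.e.\ the column and row spaces of $W$ are transversal to $\spann(U_{2\perp})$ and $\spann(V_{2\perp})$; this equivalence is what makes ``invertible precisely because the point lies in $\Uc_{Z_2}$'' usable, and it requires a short separate argument.

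The more substantive issue is in the final step you flag as the main obstacle. Carrying out the computation gives
\[
H_2 \;=\; \bigl(U_2^+(U_1+U_{1\perp}X_1)\bigr)\, H_1\, \bigl(V_2^+(V_1+V_{1\perp}Y_1)\bigr)^T ,
\]
so the fibre coordinate transforms by a \emph{two-sided} multiplication $H\mapsto A(X_1)\,H\,B(Y_1)^T$ with $A,B$ analytic $\mathrm{GL}_r$-valued functions of the base coordinates alone. Your prediction (``multiplication with an analytic $\mathrm{GL}_r$-valued function \dots with no residual $H_1$-dependence'') is right about the absence of $H_1$-dependence but implicitly assumes a one-sided cocycle. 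With a one-sided cocycle you would obtain a principal bundle in the strict textbook sense, with a global free right $\mathrm{GL}_r$-action whose orbits are the fibres; here no such canonical global action exists, precisely because of the right factor $B(Y_1)^T$. What one actually gets is a fibre bundle whose typical fibre is the Lie group $\mathrm{GL}_r$ and whose fibrewise transitions are left--right translations (structure group effectively $\mathrm{GL}_r\times\mathrm{GL}_r$); this is the sense in which \cite{Billaud2017} uses the term ``principal bundle''. So your argument can be completed, but only after adopting that definition rather than the one requiring a single transition cocycle $g_{12}:\Uc_{Z_1}\cap\Uc_{Z_2}\to\mathrm{GL}_r$ acting by left translation.
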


%\subsection{Tangent space description} \label{sec:geom-description2}

We now give a description of the tangent space to the manifold of rank-$r$ matrices at $Z$ denoted $T_{Z} \Mc_r(\RR^{n\times m})$.\\

To that goal, we define 
% the inclusion map $ i : \Mc_r(\RR^{n\times m}) \to  \mathbb{R}^{n \times m}$
%and its restriction to the neighborhood $\mathcal{U}_Z$ of $Z$ given by
%$$
%({i} \circ \theta_{Z}^{-1}):\mathbb{R}^{(n-r)\times r} \times \mathbb{R}^{(m-r)\times r} \times \mathrm{GL}_r \longrightarrow \mathbb{R}^{n \times m}, \quad (X,Y,H) \mapsto (U+U_{\bot}X)H(V+V_{\bot}Y)^T.
%$$
%Then, we  define 
the tangent map at  $Z \in \mathcal{M}_r(\mathbb{R}^{n\times m})$ noted $\mathrm{T}_{Z}{i}$  by
\begin{align*}
&&\mathrm{T}_{Z}{i}:  \mathbb{R}^{(n-r)\times r} \times \mathbb{R}^{(m-r)\times r} \times 
\mathbb{R}^{r\times r} &\rightarrow  \mathbb{R}^{n \times m},\\ 
&&(\delta X,\delta  Y,\delta H) &\mapsto  
U_{\bot}\delta  XGV^T  + U G (V_{\bot}\delta  Y)^T + U\delta  H V^T.
\end{align*}
Then, the  tangent space  to $\mathcal{M}_r(\mathbb{R}^{n\times m})$ at $Z$ is defined as the image through $\mathrm{T}_{Z}{i}$ of the tangent space in the local coordinates \footnote{ This means the tangent space to the local parameter space $\RR^{(n-r)\times r}\times \RR^{(m-r)\times r} \times \RR^{r\times r}$ at  $(0,0,G)$.} in $\mathbb{R}^{(n-r)\times r} \times \mathbb{R}^{(m-r)\times r}\times \mathbb{R}^{r\times r}$ 
\begin{align*}
{T}_Z \mathcal{M}_r(\mathbb{R}^{n\times m}) 
&= \{ U_{\bot} \delta X GV^T  +U G (V_\bot \delta Y)^T +  U \delta G V^T : \delta X \in \mathbb{R}^{(n-r)\times r}, \delta Y \in \mathbb{R}^{(m-r)\times r} ,\delta G \in  \mathbb{R}^{r\times r}  \}.
\end{align*}

As stated in \cite[Proposition 4.3]{Billaud2017}, $\mathrm{T}_{Z}i$  is an isomorphism between $T_Z \mathcal{M}_r(\mathbb{R}^{n\times m})$ and $\mathbb{R}^{(n-r)\times r} \times \mathbb{R}^{(m-r)\times r}\times \mathbb{R}^{r\times r}$. 

\begin{proposition}  \label{prop:fixedrank-submanifold}
The tangent map $\mathrm{T}_{Z}i$ at $Z$ is a linear isomorphism with inverse $ (\mathrm{T}_{Z}{i})^{-1}$ given by 
  $$
 (\mathrm{T}_{Z}{i})^{-1}(\delta W) = (U_\bot^+ \delta W (V^+)^TG^{-1},V_\bot^+\delta W^T (U^+)^TG^{-T},U^+ \delta W(V^+)^T)
 $$ 
 for $\delta W \in \mathbb{R}^{n \times m} $. 
%Furthermore, the standard inclusion map $i$ is an embedding from $\mathcal{M}_r(\mathbb{R}^{n \times m})$ to $\mathbb{R}^{n \times m}.$
\end{proposition}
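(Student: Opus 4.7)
My plan is to verify the formula by direct computation: the map $\mathrm{T}_Z i$ is manifestly $\mathbb{R}$-linear in $(\delta X,\delta Y,\delta H)$, and the candidate inverse is also linear in $\delta W$ (given by matrix multiplication by fixed matrices). Since the tangent space $T_Z\mathcal{M}_r(\RR^{n\times m})$ is by definition the image of $\mathrm{T}_Z i$, it suffices to check $(\mathrm{T}_Z{i})^{-1}\circ \mathrm{T}_Z{i} = \mathrm{id}$ on the parameter space $\RR^{(n-r)\times r}\times\RR^{(m-r)\times r}\times\RR^{r\times r}$. Injectivity of $\mathrm{T}_Z i$ then follows, and bijectivity onto the image is automatic. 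Dimensions also match on the nose ($r(n-r)+r(m-r)+r^2 = r(n+m-r)$), confirming the isomorphism.

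The computation rests on four pseudoinverse identities that are straightforward consequences of the definitions. Since $U$, $V$, $U_\perp$, $V_\perp$ all have full column rank, the Moore--Penrose inverses satisfy $U^+U=I_r$, $V^+V=I_r$, $U_\perp^+U_\perp=I_{n-r}$, $V_\perp^+V_\perp=I_{m-r}$. The defining orthogonality conditions $U_\perp^T U = 0$ and $V_\perp^T V = 0$ then yield $U^+U_\perp=0$ and $U_\perp^+U=0$, and likewise for $V$. Taking transposes gives the dual identities $V^T(V^+)^T=I_r$, $V_\perp^T(V^+)^T=0$, etc., which are what is needed when the pseudoinverses appear on the right.

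Given $(\delta X,\delta Y,\delta H)$, I would write $\delta W = \mathrm{T}_Z i(\delta X,\delta Y,\delta H) = U_\perp \delta X G V^T + U G \delta Y^T V_\perp^T + U \delta H V^T$ and then evaluate each of the three components of the candidate inverse. For $U^+\delta W(V^+)^T$, the first summand vanishes because $U^+U_\perp=0$, the second because $V_\perp^T(V^+)^T=0$, and the third collapses to $\delta H$ using $U^+U=I_r$ and $V^T(V^+)^T=I_r$. For $U_\perp^+\delta W(V^+)^T G^{-1}$, only the first summand survives, giving $\delta X\, G\, G^{-1}=\delta X$. For $V_\perp^+\delta W^T(U^+)^T G^{-T}$, one transposes $\delta W$ and observes that only the middle summand survives, leaving $\delta Y\, G^T\, G^{-T}=\delta Y$.

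There is no real obstacle here: the statement is essentially a careful bookkeeping exercise. The only thing one must be attentive to is keeping the orthogonality relations straight when the complements appear on different sides (left versus right of $\delta W$) and when transposing to reach $\delta Y$. Once the four pseudoinverse identities above are tabulated, the three projections separate cleanly and the cross-terms disappear by inspection.
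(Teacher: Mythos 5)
Your proof is correct: the four pseudoinverse identities are right, the cross-terms vanish exactly as you say, and since the paper defines $T_Z\mathcal{M}_r(\mathbb{R}^{n\times m})$ as the image of $\mathrm{T}_Z i$, checking the left-inverse identity on the parameter space is all that is needed. The paper itself gives no proof here, deferring to \cite[Proposition 4.3]{Billaud2017}, where the argument is the same direct verification you carried out.
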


By Proposition \ref{prop:fixedrank-submanifold}, any tangent matrix $\delta Z \in T_{Z} \Mc_r(\RR^{n\times m})$ admits a unique parametrization of the form 
\begin{equation}
\delta Z =  \mathrm{T}_{Z}{i} (\delta X, \delta Y, \delta H) = U_\perp\delta XGV^T+UG(V_\perp \delta Y)^T+U \delta H V^T,
\label{eq:tanchart}
\end{equation}
where $(\delta X, \delta Y, \delta H)  \in \mathbb{R}^{(n-r)\times r} \times \mathbb{R}^{(m-r)\times r}\times \mathbb{R}^{r\times r}$ are uniquely given through  
\begin{equation}
\begin{array}{rcl}
\delta X&=& U_\perp^+ \delta Z (V^+)^T G^{-1}, \\ 
\delta Y&=& V_\perp^+ \delta Z^T(U^+)^T G^{-T},\\ 
\delta H&=& U^+ \delta Z (V^+)^T.
\end{array}
\label{eq:factchart}
\end{equation}

\begin{remark} \label{rem:HV}
The tangent space could be decomposed into distinct pieces which are the vertical tangent space 
$$
{T}_Z^V \mathcal{M}_r(\mathbb{R}^{n\times m}) =  \{  U \delta G V^T : \delta G \in  \mathbb{R}^{r\times r}  \},
$$
and the horizontal tangent space
$$
{T}_Z^H \mathcal{M}_r(\mathbb{R}^{n\times m}) =  \{U_{\bot} \delta X GV^T  +U G (V_\bot \delta Y)^T   :\delta X \in \mathbb{R}^{(n-r)\times r},  \delta Y \in \mathbb{R}^{(m-r)\times r}   \},
$$
where ${T}_Z^V\mathcal{M}_r(\mathbb{R}^{n\times m})$ is associated to the fiber and ${T}_Z^H \mathcal{M}_r(\mathbb{R}^{n\times m})$ to the base.
\end{remark}

\subsection{Dynamical low-rank approximation} \label{subsec:dlr}

In the context of dynamical low-rank approximation, we recall that $Z$ is given through the projected Equation \eqref{eq:proj}. By definition of the tangent space \eqref{eq:tanchart},  the tangent matrix is given by $\dot Z = \mathrm{T}_{Z}{i}  (\dot X, \dot Y, \dot H)$ where the parameters satisfies  
\begin{equation}
 (\dot X, \dot Y, \dot H) = \mathrm{T}_{Z}{i}^{-1} \left(P_{T_{Z}} F(Z) \right) 
 \label{eq:projchart}
 \end{equation}
with 
$$
P_{T_{Z}} F(Z)  = P_U^\perp F(Z) P_V^T+   P_U^TF(Z) (P_V^\perp)^T+ P_U F(Z) P_V^T.
$$ 
Here $P_U= UU^+, P_V = VV^+$  denote  the projections associated to $U,V$ respectively, and their related orthogonal projections $P_U^\perp = I-P_U$,  $P_V^\perp = I-P_V$.
Equation \eqref{eq:projchart} yields equivalently to the following system of ODEs on the parameters: 
\begin{equation}
\begin{array}{rcl}
\dot X&=& U_\perp^+ F(Z) (V^+)^T G^{-1}, \\ 
\dot Y&=& V_\perp^+  F(Z)^T(U^+)^T G^{-T},\\ 
\dot H&=& U^+ F(Z) (V^+)^T.
\end{array}
\label{eq:dynsysfactorchart}
\end{equation} 

The proposed geometrical description ensures that  \eqref{eq:projchart} admits a unique maximal solution $Z$ when the original problem is an autonomous dynamical system with {\it vector field} $F$ \cite[Theorem 2.6, Theorem 3.5]{Falco2018}
\begin{proposition} \label{prop:wellposed}
{Assume $F : \RR^{n\times m} \to \RR^{n\times m}$ is a ${\cal C}^{p}$ vector field.} Then \eqref{eq:dynsys}  admits a unique solution. Moreover, $\mathrm{T}_{Z}{i}^{-1} P_{T_{Z}} F:  \RR^{n\times m} \to \mathbb{R}^{(n-r)\times r} \times \mathbb{R}^{(m-r)\times r}\times \mathbb{R}^{r\times r}$ is a ${\cal C}^{p}$ vector field which ensures that {the dynamical system \eqref{eq:projchart} also admits a unique solution.}
\end{proposition}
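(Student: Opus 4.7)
My plan has two parts, mirroring the two claims of the proposition.

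\textbf{Part 1 (existence and uniqueness for \eqref{eq:dynsys}).} Since $F : \RR^{n\times m} \to \RR^{n\times m}$ is assumed to be $\mathcal{C}^p$ with $p\ge 1$, it is locally Lipschitz on $\RR^{n\times m}$. The classical Cauchy--Lipschitz (Picard--Lindelöf) theorem then yields, for any initial datum $A^0$, a unique maximal solution of $\dot A = F(A)$. Nothing specific to the manifold structure is needed here.

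\textbf{Part 2 (regularity of the projected vector field and existence/uniqueness for \eqref{eq:projchart}).} The strategy is to work in local coordinates. Fix $Z_0 \in \Mc_r(\RR^{n\times m})$ and consider the chart $\theta_{Z_0}$ of Section \ref{sec:geom-description1} with reference factors $(U_0, G_0, V_0)$ and orthogonal complements $U_{0,\bot}$, $V_{0,\bot}$. A point in $\mathcal U_{Z_0}$ is represented by $(X,Y,H)$, and through the inverse chart we have the analytic expression
$$
Z(X,Y,H) = (U_0 + U_{0,\bot} X)\, H\, (V_0 + V_{0,\bot} Y)^T,
$$
together with the analytic formulas for $U = U_0 + U_{0,\bot}X$, $V = V_0 + V_{0,\bot}Y$, and their Moore--Penrose pseudoinverses (which are analytic functions of $U,V$ as long as $U^TU$ and $V^TV$ remain invertible, i.e.\ on a neighborhood of $(0,0,G_0)$). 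The plan is then to substitute these expressions into \eqref{eq:dynsysfactorchart} and observe that each of the three right-hand sides is a composition of: (i) the analytic map $(X,Y,H)\mapsto Z(X,Y,H)$, (ii) the $\mathcal{C}^p$ map $F$, and (iii) analytic rational functions of $U, V, H$ (together with the fixed $U_{0,\bot}, V_{0,\bot}$) appearing in the explicit formula of Proposition \ref{prop:fixedrank-submanifold}. Therefore $(\dot X, \dot Y, \dot H)$ is a $\mathcal{C}^p$ function of $(X,Y,H)$ on a neighborhood of $(0,0,G_0)$, and in particular locally Lipschitz. Cauchy--Lipschitz applied in this chart yields a unique local solution; by pushing forward through $\theta_{Z_0}^{-1}$ and reiterating the argument in successive charts along the trajectory, one gets a unique maximal solution $Z(t)$ of \eqref{eq:projchart} starting from $Z^0 = \theta_{Z_0}^{-1}(0,0,G_0)$.

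\textbf{Main obstacle.} The only delicate point is the smoothness of the chart-dependent objects $U^+, V^+, H^{-1}$ that appear in $\mathrm{T}_Z i^{-1}$: one has to ensure that the matrices $U^TU$, $V^TV$ and $H$ stay nonsingular, which is precisely what the open neighborhood $\mathcal U_{Z_0}$ guarantees by construction, and what allows us to invoke the analyticity of matrix inversion. Once this is checked, the conclusion is essentially a standard application of Cauchy--Lipschitz transported to the manifold through the atlas given by Proposition \ref{prop:manifoldprop}.
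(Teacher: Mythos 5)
Your proposal is correct. The paper itself gives no proof of this proposition beyond the citation to \cite[Theorem 2.6, Theorem 3.5]{Falco2018}, and your argument --- Cauchy--Lipschitz for the ambient ODE, then verification in a local chart that $(X,Y,H)\mapsto(\dot X,\dot Y,\dot H)$ is a composition of the $\mathcal{C}^p$ map $F$ with analytic chart maps and matrix (pseudo-)inversions on the open set where $H\in\mathrm{GL}_r$ (note that $U^TU=U_0^TU_0+X^TU_{0,\bot}^TU_{0,\bot}X$ is in fact invertible for \emph{all} $X$, so only the invertibility of $H$ is a genuine constraint), followed by the standard chart-patching argument --- is precisely the standard reasoning underlying the cited results, so it fills in what the paper delegates to the reference.
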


\subsection{Link with the geometric description introduced in  \cite{Koch2007}.} \label{sec:classical}

In this section, we discuss the relation between the proposed geometric approach and the description introduced in  \cite{Koch2007} .
In the latter, the non-uniqueness of the parametrization $Z = UGV^T$ is avoided by computing the tangent matrix $\delta Z$ in $T_Z \Mc_r(\RR^{n \times m})$. Introducing 
\begin{equation}
\delta Z = \delta U GV^T + U \delta G V^T + U  G \delta V^T 
\label{eq:tankoch}
\end{equation}
 together with the {\it gauge conditions}
\begin{equation}
U^T \delta U = 0  \text{ and } V^T \delta V =0,
\label{eq:gaugekoch}
\end{equation}
and assuming that $U,V$ are orthogonal, then the parameter derivatives $\delta U, \delta G, \delta V$ are uniquely given \cite[Proposition 2.1]{Koch2007} by 
\begin{equation}
\begin{array}{rcl}
\delta U&=& (I-UU^T) \delta Z V G^{-1}, \\ 
\delta V&=& (I-VV^T) \delta Z^TU G^{-T},\\ 
\delta G&=& U^T \delta Z V.
\end{array}
\label{eq:factkoch}
\end{equation}
This can be interpreted as constructing an isomorphism between 
$$
T_{Z} \Mc_r(\RR^{n \times m}) ~ \text{ and }
\{(\delta G, \delta U, \delta V) \in \RR^{n\times r} \times \RR^{m\times r}\times \RR^{r\times r} : U^T \delta U = 0 , V^T \delta V =0\}.
$$
The geometric description proposed in  Section  \ref{sec:geom-description1} allows to recover \eqref{eq:tankoch}-\eqref{eq:factkoch}. Indeed, by setting $(\delta U,\delta V,\delta G) = (U_\perp  \delta X,V_\perp  \delta Z,\delta H)$ in the definition \eqref{eq:tanchart} of the tangent matrix $\delta Z$, we get \eqref{eq:tankoch}. Moreover, the gauge conditions are naturally satisfied as $U^T\delta U = U^T U_\perp \delta X =0, V^T\delta V = V^T V_\perp \delta Z =0$. Finally, for $U,V$ orthogonal we have $UU^+ = UU^T, VV^+=VV^T$ and by multiplying the first and second equations of \eqref{eq:factchart} by $U_\perp$ and $V_\perp$ respectively we recover \eqref{eq:factkoch}. \\

Going back to dynamical low-rank approximation, when $Z$  corresponds to the rank-$r$ approximation of a matrix $A$ through Equation \eqref{eq:proj}, we get from \eqref{eq:factkoch} the following system of ODEs governing the evolution of the factors $U,G,V$, 
\begin{equation}
\begin{array}{rcl}
\dot U&=& (I-UU^T) F(Z) V G^{-1}, \\ 
\dot V&=& (I-VV^T)  F(Z)^TU G^{-T},\\ 
\dot G&=& U^T F(Z) V. 
\end{array}
\label{eq:projoch}
\end{equation}
Again, this system can be deduced from \eqref{eq:dynsysfactorchart} by setting $(\dot U,\dot V,\dot G) = (U_\perp  \dot X,V_\perp  \dot Z,\dot H)$ and assuming that $U,V$ orthogonal.

\section{Projection splitting integrator schemes}\label{sec:algos}

In this section, we derive suitable schemes for numerical integration of the projected equation \eqref{eq:proj}. 
Two splitting methods are presented, first in an abstract semi-discretized framework in Section \ref{sec:abstract_algo}  and then in their practical form in Section \ref{sec:pract_algo}. Then, the relation between these two practical algorithms is discussed.

\subsection{Splitting integrators} \label{sec:abstract_algo}

%We start this section by introducing the original symmetric splitting algorithm introduced in \cite{Lubich2014}, then we detail a new algorithm relying on the chart based description of the fixed rank matrix manifold. 

\subsubsection{Symmetric splitting method} \label{ref:KSL_continu}

We first consider the setting of the classical description \cite{Koch2007}  detailed in Section
\ref{sec:classical}. To perform time integration, a symmetric Lie-Trotter splitting method \cite{Lubich2014} is applied to Equation
\eqref{eq:DF}. {This integration scheme relies on a decomposition of the projection 
$P_{T_Z(t)}$ as follows 
\begin{equation}
P_{T_Z(t)} = {Q}_1 - {Q}_2  + {Q}_3
\label{eq:splitting_op_KSL},
\end{equation}
where ${Q}_1$,  ${Q}_2$ and  ${Q}_3$ are three projections respectively defined by
\begin{equation}
{Q}_1A =  AP_V^T, \quad {Q}_2A = P_U A P_V^T, \quad {Q}_3 A =  P_UA,
\end{equation}
 %
%\begin{equation}
%P_{T_Z(t)} A = \underbrace{ AP_V^T}_{{\cal P}_1A} -\underbrace{P_U A P_V^T}_{{\cal P}_2 A}  + \underbrace{P_UA}_{{\cal P}_3 A}
%\end{equation}
 for any $A \in \RR^{n \times m}$. Note that this splitting is not associated to a direct sum decomposition of the tangent space. Using this splitting, one integration step from $t_0$ to $t_1$} starting from
the factorized rank-$r$ matrix $Z_0=Z(t_0)$ under the form
$Z_0 = U_0 G_0V_0^T$ reads as follows. 

%\begin{enumerate}
%\item Let $Z_1=U_1G_1V_1^T$. Integrate on $[t_0,t_1]$ the $n \times r$ matrix differential equation
%$$\dfrac{d}{dt} (U_1G_1)(t) = F(Z_1(t),t)V_1(t)  \;  \text{and} \; \dot V_1(t) =0,  $$ 
%\todo{with initial conditions $U_1(t_0) =U_0, G_1(t_0) = G_0$ and $V_1(t_0)=V_0$. } 
%\item Let $Z_2=U_2G_2V_2^T$. Integrate on
%  $[t_0,t_1]$ the $r\times r$ matrix differential equation
%$$\dot G_2(t) = - U_2^T F(Z_2(t),t)V_2,  \;  \text{and} \;  \dot U_2(t) = 0, \dot V_2(t) =0,$$
%\todo{with initial conditions $U_2(t_0) =U_1(t_1), G_2(t_0) = G_1(t_1)$ and $V_2(t_0)=V_1(t_1)$. } 
%\item
%Let $Z_3=U_3G_3V_3^T$. Integrate on
%  $[t_0,t_1]$ the $m\times r$ matrix differential equation
%$$\dfrac{d}{dt} (V_3G_3^T)(t) = F(Z_3(t),t)^TU_3 \;  \text{and} \;  \dot U_3(t) = 0,$$
%\todo{with initial conditions $U_3(t_0) =U_2(t_1), G_3(t_0) = G_2(t_1)$ and $V_3(t_0)=V_2(t_1)$. } 
%\end{enumerate}

{
%\begin{enumerate}
%\item Integrate on $[t_0,t_1]$ the $n \times r$ matrix differential equation
%$$\dfrac{d}{dt} (UG) = F((UG)V^T,t)V  \;  \text{and} \; \dot V(t) =0,  $$ 
%\todo{with initial conditions $(UG)(t_0) =U_0G_0$ and $V(t_0)=V_0$. Then set $U_1G_1 = UG(t_1)$, $V_1=V_0$ and $Z_1 = U_1G_1V_1^T$. } 
%\item Integrate on
%  $[t_0,t_1]$ the $r\times r$ matrix differential equation
%$$\dot G(t) = - U(t)^T F(U(t)G(t)V(t)^T,t)V(t),  \;  \text{and} \;  \dot U(t) = 0, \dot V(t) =0,$$
%\todo{with initial conditions $U(t_0) =U_1, G(t_0) = G_1$ and $V(t_0)=V_1$. Then set $G_2 = G(t_1)$, $U_2=V(t_1)$ and $V_2 = V(t_2)$\footnote{Note that the second step is equivalent to a backward in time dynamical system integrated from $t_1$ to $t_0$, $\dot G(t) = U^T F(U(t)G(t)V(t)^T,t)V(t)$ with final condition.}} 
%\item
%Let $Z_3=U_3G_3V_3^T$. Integrate on
%  $[t_0,t_1]$ the $m\times r$ matrix differential equation
%$$\dfrac{d}{dt} (V_3G_3^T)(t) = F(Z_3(t),t)^TU_3 \;  \text{and} \;  \dot U_3(t) = 0,$$
%\todo{with initial conditions $U_3(t_0) =U_2(t_1), G_3(t_0) = G_2(t_1)$ and $V_3(t_0)=V_2(t_1)$. } 
%\end{enumerate}
%}
%
\begin{enumerate}
\item Integrate on $[t_0,t_1]$ the $n \times r$ matrix differential equation
$$\dfrac{d}{dt} (UG) = F( (UG)V^T)V, \quad \dot V = 0,$$
with initial conditions 
 $ (UG)(t_0) = U_0G_0$, $ V(t_0)=V_0.$
Then set $U_1$ and $\hat G_1$ such that $ U_1\hat G_1 = (UG)(t_1)$.
% by QR factorization and $V_1=V_0$.
\item
Integrate on
  $[t_0,t_1]$ the $r\times r$ matrix differential equation
$$\dot G = - U^T F(UGV^T)V, \quad \dot U=0 , \quad \dot V = 0, $$
with initial conditions $G(t_0) = \hat G_1$, $U(t_0)= U_1$,  $V(t_0)=V_0.$
Set $\tilde G_1 =G(t_1)$.
\item
Integrate on
  $[t_0,t_1]$ the $m\times r$ matrix differential equation
$$\dfrac{d}{dt} (VG^T) = F(U(VG^T)^T)^T U, \quad \dot U = 0,$$
with initial conditions $(VG)^T(t_0) = V_0\tilde G_1^T$, $U(t_0)= U_1$.
Set $V_1$ and $G_1$ such that $V_1 G_1^T = (VG^T)(t_1)$.
\end{enumerate}
}
{
After these steps, the obtained approximation is $Z(t_1) = U_1G_1V_1^T$.  Each step corresponds to the integration of the right hand side of \eqref{eq:proj} associated to the projections $Q_1$, $Q_2$ and $Q_3$ respectively (for details see \cite[Lemma 3.1]{Lubich2014}).  
}
\begin{remark}
This splitting algorithm works in the following order. First, it updates $UG$, then $G$ and finally $VG$. As we will discuss in  Section \ref{sec:pract_algo}, this particular choice allows to recover exactness properties of the splitting scheme in the context of matrix approximation  \cite{Lubich2014}.
\end{remark}

\begin{remark}
The inversion of $G$ is avoided. This
  convenient choice allow to deal with the case of over-approximation, in
i.e. when the rank of the approximation $Z$ is
  smaller then $r$.
\end{remark}

\subsubsection{Chart based splitting method} \label{sec:chart_splitting}

The second contribution of this paper is to propose a numerical integrator relying on {the fibre bundle structure} of the manifold of fixed rank matrices proposed in Section
\ref{sec:geom-description}. \\

{Based on the chart description of Section \ref{subsec:dlr}, the guiding idea is to perform some update of the parameters $(X,Y,H)$, instead of $(U,G,V)$ directly, whose dynamic is governed by the system of ODEs \eqref{eq:dynsysfactorchart}. Working in a fixed neighborhood ${\cal U}_Z$ of $Z$, the matrices $U,V,G$ are fixed and Equation \eqref{eq:dynsysfactorchart} writes equivalently 
\begin{equation}
\begin{array}{rcl}
\dot H&=& U^+ F(Z) (V^+)^T,\\[0.2cm]
\dot X G &=& {U}_\perp^+ F(Z) (V^+)^T, \\[0.2cm]
\dot Y G^T&=& {V}_\perp^+ F(Z)^T(U^+)^T.
\end{array}
\label{eq:dynsysfactorchartmodif}
\end{equation}
 %using the same notations as in Section \ref{ref:KSL_continu}, 
Then, we integrate the system \eqref{eq:dynsysfactorchartmodif} from $t_0$ to $t_1$ in three steps. Letting $U(t) = U(t_0) + U(t_0)_\perp X(t)$, $V(t) =V(t_0) + V(t_0)_\perp Y(t)  $ and $Z(t) = U(t) H(t) V(t)^T$, we start  from $(X(t_0),Y(t_0),H(t_0))=(0,0,G(t_0))$ and we proceed as follows. 
% 
%We let $Z(t) = (U(t_0) + U(t_0)_\perp X(t) ) H(t)  (V(t_0) + V(t_0)_\perp Y(t) ) ^T$ and define $X$
%Starting from 
% $Z(t_0) = U(t_0)G(t_0)V(t_0)^T$, we define $Z(t) = (U(t_0) + U(t_0)_\perp X(t) ) H(t)  (V(t_0) + V(t_0)_\perp Y(t) ) ^T$ in the neighborhood of $Z(t_0)$
%
%we perform one
%integration step from $t_0$ to $t_1= t_0+\Delta t$ of the system \eqref{eq:dynsysfactorchartmodif} using the following integration scheme with three steps. At $t=t_0$ we have 
%$Z_0 := Z(t_0) = U(t_0)G(t_0)V(t_0)^T$  so that $(X(t_0),Y(t_0),H(t_0))=(0,0,G(t_0))$. Then we obtain 
}
{
\begin{enumerate}
\item
%Let $Z_1 =  (U(t_0)+U_\perp(t_0)X_1)H_1(V(t_0)+V_\perp(t_0)Y_1)^T$.
Integrate on $[t_0,t_1]$ the $r \times r$ matrix differential equation
$$
\dot H = U^+F( Z)(V^+)^T,  \; \dot X =0,  \quad \dot Y=0,
$$
{with initial conditions $X(t_0)=0, H(t_0) = G(t_0)$ and $Y(t_0)=0$. } Set $ H_1 = H(t_1)$.  
\item
% Let $Z_2 =   (U(t_0)+U_\perp(t_0)X_2)H_2(V(t_0)+V_\perp(t_0)Y_2)^T$. 
Integrate on $[t_0,t_1]$
  the $n\times r$ matrix differential equation
$$
\dot X H=  U_\perp^+ F(Z)(V^+)^T,  \quad {\dot H =0}, \quad \dot Y=0,
$$
%Set $U_2G_2 = U_1G_1+(U_{1,\perp}X G)(t^1) $ by QR factorization, and $V_2=V_1$.
{with initial conditions $X(t_0)=0, H(t_0) =   H_1$ and $Y(t_0)=0$. } 
%Then, set $U(t_1) H_2(t_1)= U(t_0)H_1(t_1)+U_{\perp}(t_0)X_2(t_1)  H_2(t_1) $.
Then set $ X_1 = X(t_1)$.
\item  %Let $Z_3 =   (U(t_1)+U_\perp(t_1)X_3)H_3(V(t_0)+V_\perp(t_0)Y_3)^T$. 
Integrate on
  $[t_0,t_1]$ the $m\times r$ matrix differential equation
$$
\dot Y H^T  = V_\perp^+ F(Z)^T(U^+ )^T,  \quad  {\dot H=0}, \quad \dot X=0.
$$
with initial conditions $X(t_0)=X_1, H(t_0) = H_1$ and $Y(t_0)=0$. 
%Set $V_3G_3^T = V_2G_2^T+(V_{2,\perp}Y G^T)(t^1)$ by QR factorization and $U_3=U_2$.
%Then, set $V(t_1) H_3(t_1)^T = V(t_0)H_2(t_1) +V_{\perp}(t_0)Y_3(t_1) H_3(t_1)^T$.
Then set $Y_1 = Y(t_1)$.
\end{enumerate}
After these three steps, we obtain an approximation $Z(t_1) := U_1 H_1V_1 ^T$ with $U_1 = U(t_0) + U(t_0)_\perp X_1$ and 
$V_1 = V(t_0) + V(t_0)_\perp Y_1$. \\
}
%Here, each Step $i$ of the proposed method can be interpreted as working in the neighborhood of rank-$r$ approximation at the previous step of the splitting noted $Z_{i-1} = U_{i-1}G_{i-1}V_{i-1}^T$.  \\

In the lines of the previous method (see Section \ref{ref:KSL_continu}),
the chart based method {can be interpreted as a Lie-Trotter splitting that relies on the following decomposition
of the projection  
\begin{equation}
P_{T_Z(t)} = { P}_1 + { P}_2  + { P}_3
\label{eq:splitting_op_chart},
\end{equation}
where ${ P}_1$,  ${ P}_2$ and  ${ P}_3$ are three projections respectively defined by
\begin{equation}
{  P}_1A =  P_U A P_V^T, \quad {  P}_2A = P_U^\perp AP_V^T, \quad {  P}_3 A =  P_U A (P_V^\perp)^T,
\end{equation}
%
%\begin{equation}
%P_{T_{Z(t)}} A = \underbrace{P_U A P_V^T}_{P_1(A)}  + \underbrace{P_U^\perp AP_V^T}_{P_2(A)}+ \underbrace{P_U A (P_V^\perp)^T}_{P_3(A)},
%\label{eq:splitting_op_chart}
%\end{equation}
for any $ A \in \RR^{n \times m}$. Note that, contrary to the symmetric splitting method, the proposed splitting follows from a direct sum decomposition of the tangent space.}
Here each term $P_i$ of the projection is associated to the ODE solved at Step $i$. This point is discussed and detailed in the Appendix \ref{A:1}. \\

%\begin{remark}
%At each Step $i$ of the chart based method, we work in the neighborhood of the  previous rank-$r$ approximation noted $Z_{i-1} = U_{i-1}G_{i-1}V_{i-1}^T$. 
%In particular, at Step 2 the matrices $U_1,V_1$ (together with their orthogonal matrices $V_{1,\perp}, U_{1,\perp}$) are fixed implying that $(U_{1,\perp}X G)(t^0) =0$. 
%Similarly, at  Step 3, we work locally in neighborhood of $Z_2$  leading to $(V_{2,\perp}Y G^T)(t^0)=0$.
%  \end{remark} 

\begin{remark} 
As for the splitting method of Section \ref{ref:KSL_continu}, we avoid the inversion of matrix $G$ which allows to deal with overapproximation case where $H$ is singular. 
\end{remark}

{
\begin{remark} \label{rem:sign}
The symmetric splitting and the chart based methods differ by the update order. Indeed, the chart based method first updates $H$ and then $X,Y$ (or equivalently $G$ and then $U,V$). Moreover, Step 2 of the symmetric splitting method described in Section \ref{ref:KSL_continu} can be interpreted as a backward evolution problem  that can be ill-conditionned, as pointed out in \cite[Section 5]{Bachmayr2020}.  In the chart based method, the update of $G$ at Step 1 is still a forward evolution problem due to our splitting choice.  Let us mention that other integration strategies have been recently proposed in \cite{Bachmayr2020,Ceruti2020}. 
%However, the two methods differ by the update order. Indeed, the chart based method first updates $G$ and then $U,V$ (or equivalently $H$ and then $X,Y$).
\end{remark}
 }
 
\subsection{Practical algorithms}  \label{sec:pract_algo}

Now, we provide practical formulation of those methods amenable for numerical use. To that goal, let introduce preliminary notations. We consider a uniform discretization of the time interval $[0,T]$, $T>0$, containing $K+1$ nodes $0 = t^0<t^1<t^2<\dots<t^K$ where $t^k = k \Delta t$ and $\Delta t = \frac{T}{K}$. The matrix $Z^k$ is the approximation at each time step $t^k$ of $Z$ computed on $[t_0,t_1] = [t^{k},t^{k+1}]$ through the splitting methods given in Section \ref{sec:abstract_algo}. Explicit approximation of the flux is performed leading the schemes summarized in the following algorithms.\\

{Following the previous sections, we first introduce the {\it KSL Algorithm} (see Algorithm \ref{alg:KSL}) for the symmetric splitting scheme. Here, we adopt the original name given in \cite{Lubich2014}, with the following correspondence: $K$ stands for $UG$, $S$ for $G$ and $L$ for $VG^T$. 
\begin{algorithm}[KSL algorithm]\label{alg:KSL}
Given the initial rank-$r$ approximation $Z^0 = U^0G^0(V^0)^T$, compute $Z^k \in \Mc_r(\RR^{n \times m})$ for $k \in \{1,...,K\}$ as follows. 
\begin{itemize}
\item Start with $U_0=U^{k-1}, G_0=G^{k-1}$ and $V_0=V^{k-1}$.
\begin{enumerate}
\item Set 
$$
(U G)_1  =  U_0 G_0  + \Delta t F(Z_0,t^{k-1})V_0 
$$
and $U_1$ and $\hat G_1$ such that $U_1 \hat G_1 = (UG)_1$ (computed using QR).
\item Set
$$
\tilde G_1 = \hat G_1 -\Delta t U_1^T F(U_1 \hat G_1 V_0^T, t^{k-1})V_0.
$$
\item Set 
$$
(VG^T)_1 = V_0 \tilde G_1^T + \Delta t F(U_1 \tilde G_1 V_0^T, t^{k-1})^TU_1^T  
$$
and $V_1$ and $G_1$ such that $V_1 G_1^T = (VG^T)_1$ (computed using QR).
 \end{enumerate}
\item  Set $Z^k =  U_1 G_1 V_1^T$.
\end{itemize}
\end{algorithm}

Now, we give the {\it chart based algorithm} (see Algorithm \ref{alg:1chart}) relying on the splitting introduced in Section \ref{sec:chart_splitting}. 
\begin{algorithm}[Chart based algorithm]\label{alg:1chart}
Given the initial rank-$r$ approximation $Z^0 = U^0G^0(V^0)^T$, compute $Z^k \in \Mc_r(\RR^{n \times m})$ for $k \in \{1,...,K\}$ as follows. 
\begin{itemize}
\item Set $U_0= U^{k-1},V_0=V^{k-1}$ and $H_0=G^{k-1}$.
\begin{enumerate}
\item Set
$$
\hat H_1 =  H_0+ \Delta t  (U_0)^+ F(U_0 G_0 V_0^T, t^{k-1})(V_0^+)^T.
$$
\item Set  
$$
(XH)_1 = \Delta t \, {U_0}_\perp^+ F(U_0 \hat H_1 V_0^T,t^{k-1})(V_0^+)^T
$$
and $U_1$ and $\tilde H_1$ such that 
$U_1 \tilde H_1 = U_0 \hat H_1  + U_{0,\perp} (XH)_1$ (computed using QR).
\item Set
$$
 (YH^T)_1  = \Delta t \,  {V_0}_\perp^+ F(U_1 \tilde H_1V_0^T,t^{k-1})^T(U_1^+)^T, 
$$
and $V_1$ and $H_1$ such that ${V_1 H_1^T  }  = V_0 \tilde H_1^T   +  V_{0,\perp}  (YH^T)_1$ (computed using QR). 
\end{enumerate}
\item Set $Z^k =  U_1 H_1 V_1^T$.
\end{itemize}
\end{algorithm}
}

{
\begin{remark}
For numerical stability, updates of $U$ and $V$ are performed with $QR$ factorization in both algorithms. Note that $SVD$ should have been considered instead. 
\end{remark}
\begin{remark}
From practical point of view, the new algorithm is very similar  to the KSL algorithm in its implementation. The only practical differences are in the order of the update and the sign in the update of $G$ at Step 1, as discussed in Remark \ref{rem:sign}.
\end{remark}
\begin{remark}
The proposed practical algorithms are only first order splitting methods. Higher order extensions should be considered in the line of \cite[Section 3.3.]{Lubich2014}.
\end{remark}
\begin{remark}
In the chart based algorithm, matrices  ${U_0}_\perp$ and ${V_0}_\perp$ are not computed explicitly in practice. 
Indeed, we directly compute $U_{0,\perp}(X H)_1$ and $V_{0,\perp}(Y H^T)_1$ that only requires the calculation of  $P_{U_0}^\perp = U_{0,\perp}U_{0,\perp}^+ = I - P_{U_0}$ and $P_{V_0}^\perp = V_{0,\perp}V_{0,\perp}^+ = I - P_{V_0}$. 
\end{remark}
}

\paragraph{{Link between the two practical algorithms}}
Here, we discuss the similarity of the two proposed practical algorithms in the particular case where the flux $F$ is independent of $Z$
which includes the particular case of matrix approximation where $F(Z(t),t) := \dot A(t)$ with $A(t) \in \RR^{n \times m}$. 

\begin{lemma}\label{prop:similar}
In the case where the flux $F$ is independent of $Z$ meaning 
$
F(Z(t),t) := F(t),
$
 the sequence of approximations $\{Z^k\}_k \subset \Mc_r(\RR^{n\times m})$ provided by KSL algorithm and chart based algorithm coincide. 
\end{lemma}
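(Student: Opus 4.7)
The plan is to run the two algorithms in parallel and verify that at each QR step the matrix being factorized coincides. The key simplification when $F(Z(t),t) = F(t^{k-1}) =: F$ is that $F$ does not depend on the intermediate iterates produced by the splitting, so the three explicit substeps simply add scaled copies of the same matrix $F$.

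First I would use that both algorithms are initialized with $U_0, V_0$ arising from QR factorizations of the previous iteration (or from the initial decomposition, which we may assume is column-orthogonal), so $U_0^+ = U_0^T$, $V_0^+ = V_0^T$, and in particular $(V_0^+)^T = V_0$, $(U_0^+)^T = U_0$. Under this setup the chart based Step~1 reads $\hat H_1 = H_0 + \Delta t\, U_0^T F V_0$, and Step~2 yields
\[
U_0 \hat H_1 + U_{0,\perp}(XH)_1 \;=\; U_0 H_0 + \Delta t\bigl(U_0 U_0^+ + U_{0,\perp}U_{0,\perp}^+\bigr) F V_0 \;=\; U_0 H_0 + \Delta t\, F V_0,
\]
by the identity $P_{U_0} + P_{U_0}^\perp = I$. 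Since $H_0 = G_0$, the right-hand side equals the matrix $(UG)_1 = U_0 G_0 + \Delta t\, F V_0$ that is QR'd in KSL Step~1. Hence both algorithms produce the \emph{same} $U_1$ and the relation $\tilde H_1 = \hat G_1$.

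Next I would unfold the two Step~3 updates and show they also produce the same matrix to QR. In KSL we have $\tilde G_1 = \hat G_1 - \Delta t\, U_1^T F V_0$, so
\[
(VG^T)_1 \;=\; V_0 \tilde G_1^T + \Delta t\, F^T U_1 \;=\; V_0 \hat G_1^T + \Delta t\,(I - V_0 V_0^T) F^T U_1 \;=\; V_0 \hat G_1^T + \Delta t\, P_{V_0}^\perp F^T U_1.
\]
In the chart based algorithm Step~3 reads directly
\[
V_1 H_1^T \;=\; V_0 \tilde H_1^T + \Delta t\, V_{0,\perp} V_{0,\perp}^+ F^T (U_1^+)^T \;=\; V_0 \tilde H_1^T + \Delta t\, P_{V_0}^\perp F^T U_1,
\]
using $(U_1^+)^T = U_1$ since $U_1$ is column-orthogonal after Step~2. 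Because $\tilde H_1 = \hat G_1$, the two matrices $(VG^T)_1$ and $V_1 H_1^T$ coincide, so applying QR gives the same $V_1$ and $G_1 = H_1$.

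Putting the three comparisons together, $Z^k = U_1 G_1 V_1^T = U_1 H_1 V_1^T$ in both algorithms, and an induction on $k$ closes the proof. The only nontrivial observation is the algebraic one: although the chart based method splits $P_{T_Z}$ into three projections that sum with a $+$ sign while KSL uses the alternating sum, the KSL minus sign in Step~2 together with the orthogonal complement produced in Step~3 combines to exactly the $P_{V_0}^\perp$ projection that the chart based algorithm applies in one shot. I do not anticipate a genuine obstacle; the proof is a direct matching of the two schemes, made possible by the fact that $F$ is constant along each substep.
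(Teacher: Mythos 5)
Your argument is correct and is essentially the paper's own proof: both match the two schemes substep by substep, showing that the matrix factorized in the chart based Step~2 equals $(UG)_1$ via $P_{U_0}+P_{U_0}^\perp = I$, and that the KSL minus sign in Step~2 recombines with Step~3 into the single $P_{V_0}^\perp$ term of the chart based update. The only cosmetic difference is that you make explicit the column-orthogonality of $U_0,V_0$ (so that pseudo-inverses reduce to transposes), which the paper uses implicitly.
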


\begin{proof}

We assume that the explicit flux evaluation $F(t^{k-1})$, required at each time step $k$ of both KSL and chart based algorithms, is noted $F^{k-1}$  for the sake of presentation.  {For the sake of comparison, the two algorithms are reformulated as follows in term of successive updates on $U,G,V$ and $K,L$. Approximations provided by the chart based algorithm are surrounded by a $\bar{}$ symbol.}  \\
{
At step $k$, the KSL algorithm provides  the following approximations 
\begin{eqnarray}
K_1 =  U_0 G_0+\Delta t F^{k-1}V_0,  & (U_1,\hat G_1) = QR(K_1), \label{eq:KSL1}\\ 
\tilde G_1 = \hat G_1 - \Delta t  U_1^T F^{k-1} V_0,& \label{eq:KSL2}\\ 
L_1 =  V_0 \tilde G_1^T+ \Delta t (F^{k-1})^TU_1,  &  (V_1,G_1^T)  = QR(L_1). \label{eq:KSL3}
\end{eqnarray}
Meanwhile, the chart based algorithm gives
\begin{eqnarray}
\bar {\hat G}_1 = G_0 + \Delta t  U_0^T  F^{k-1} V_0,  & \label{eq:c1}\\
\bar K_1 = U_0 \bar {\hat G}_1+\Delta t  (I-P_{U_0}) F^{k-1} V_0,  & (\bar U_1, \bar{\tilde G}_1)=QR(\bar K_1), \label{eq:c2}\\
\bar L_1= V_0\bar{\tilde G}_1^T+ \Delta t  (I-P_{V_0}) (F^{k-1})^T\bar U_1,  & (\bar V_1,\bar G_1^T)=QR(\bar L_1) \label{eq:c3}.
\end{eqnarray}
Expending $\bar {\hat G}_1$ in Equation \eqref{eq:c2} yields 
$$\bar K_1= U_0 \bar {\hat G}_1+\Delta t  (I-P_{U_0}) F^{k-1} V_0 =  U_0G_0+\Delta t  U_0U_0^T  F^{k-1} V_0 + \Delta t  (I-U_0U_0^T) F^{k-1}  V_0  = K_1.$$
Then $\bar{\tilde G}_1 = \hat G_1$ and $\bar U_1 = U_1$. Using these equalities and injecting the expression of $L_1$, we get 
$$  
L_1 = V_0 \tilde G_1^T+ \Delta t (F^{k-1})^TU_1= V_0 \bar{ \tilde G}_1^T- \Delta t \bar V_0 \bar V_0^T(F^{k-1})^T \bar U_1  + \Delta t (F^{k-1})^T \bar U_1 = \bar L_1$$
from which we deduce $L_1 = \bar L_1$ then $G_1 = \bar G_1$ and $V_1 = \bar V_1$. This implies that $Z^k = \bar Z^k$ which concludes the proof. 
}
\end{proof}

In \cite{Lubich2014}, under the assumption that the set of matrices of bounded rank $r$ is an invariant set for the flow, it follows that the KSL method is first order accurate and exact. In consequence, under the same assumptions from Lemma~\ref{prop:similar} the chart based method satisfies the same properties.

\begin{proposition}\label{prop:exactness}
Assume that $A(t) \in \RR^{n \times m}$ is a matrix with bounded rank $r$ for all time $t$ and $F(Z(t),t) := \dot A(t).$ Setting $\Delta t F(t^{k-1}) = A(t^k)-A(t^{k-1})$, 
the chart based splitting algorithm is exact, that is, $Z^k = A(t^k).$ 
\end{proposition}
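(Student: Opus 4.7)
My plan is to exploit Lemma \ref{prop:similar} together with the known exactness of the KSL integrator proved in \cite{Lubich2014}, rather than redoing the entire induction for the chart based scheme from scratch. The observation is that the hypothesis $F(Z(t),t):=\dot A(t)$ makes the flux a function of $t$ alone, independent of $Z$, so Lemma \ref{prop:similar} applies at every time step.

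First I would argue that under the stated assumptions, the sequence $\{Z^k\}$ produced by the chart based algorithm coincides with the sequence produced by the KSL algorithm. Since $F(Z(t^{k-1}),t^{k-1})=\dot A(t^{k-1})$ does not depend on $Z$, the explicit evaluations $F^{k-1}$ in both algorithms agree with the common value $\Delta t\,F^{k-1}=A(t^k)-A(t^{k-1})$ given in the statement. Lemma \ref{prop:similar} then yields $Z^k=\bar Z^k$ for every $k$, so it suffices to show the KSL approximation satisfies $Z^k=A(t^k)$.

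Second, I would invoke the exactness property of KSL from \cite{Lubich2014}: when the trajectory $t\mapsto A(t)$ lies in $\Mc_r(\RR^{n\times m})$ and the increment form $\Delta t\,F^{k-1}=A(t^k)-A(t^{k-1})$ is used, one step of KSL from $Z^{k-1}=A(t^{k-1})$ returns $Z^k=A(t^k)$. Starting from $Z^0=A(0)$, an elementary induction on $k$ then propagates the identity $Z^k=A(t^k)$ to all $k\in\{1,\dots,K\}$. Combined with the equivalence of the two algorithms established in the first step, this yields the claim for the chart based algorithm.

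The only non-routine point in the plan is making the reduction to KSL completely rigorous: Lemma \ref{prop:similar} is stated for a single step with a common initial factorization $U_0,G_0,V_0$, so I need to verify that at each step $k$ the two algorithms indeed receive the same input factors. This follows because both algorithms output the same $(U_1,G_1,V_1)$ (and hence the same $Z^k$) from identical inputs, so by induction the factors fed into step $k+1$ also coincide. A fully self-contained alternative would be to substitute $A(t^k)=A(t^{k-1})+\Delta t\,F^{k-1}$ into the updates \eqref{eq:c1}--\eqref{eq:c3} of the chart based algorithm and check directly that the QR factorizations reproduce an exact rank-$r$ decomposition of $A(t^k)$; this is essentially the same computation as in \cite[Thm.~4.1]{Lubich2014}, but it is cleaner to cite it and rely on Lemma \ref{prop:similar}.
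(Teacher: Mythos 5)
Your proposal is correct and follows essentially the same route as the paper: the paper's proof likewise reduces the claim to Lemma~\ref{prop:similar} combined with the exactness of the KSL algorithm from \cite[Theorem 4.1]{Lubich2014}. Your additional remarks on propagating the common factorization by induction over the time steps only make explicit what the paper leaves implicit.
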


\begin{proof}
The result follows from Lemma \ref{prop:similar}  and the property of exactness of the KSL algorithm \cite[Theorem 4.1]{Lubich2014} in that context. 
\end{proof}

\begin{remark}
Proposition \ref{prop:exactness} insures that both methods are exact, whereas they rely on two different splitting methods, with different order of parameter updates.  This may sound surprisingly at first, as it has been demonstrated by numerical experiment that \cite[\S 5.2]{Lubich2014} KLS variant \footnote{For that variant, it means that $UG$,  $VG^T$ and then $G$ are updated.}  of the KSL algorithm provides less accurate approximations and loses exactness property.
\end{remark}

%when $\Delta t F(t^{k-1}) = A(t^k)-A(t^{k-1})$ and as the set of matrices of bounded rank $r$ is invariant for the flow
\section{Numerical results} \label{sec:num_results}

In this section, we confront the two presented splitting algorithms namely {\bf KSL} and {\bf chart based} algorithms for dynamical low-rank approximation in the context of matrix approximation, in Section \ref{sec:num_matrix}, and matrix dynamical systems arising from a parameter-dependent semi-discretized viscous Burgers's equation, in Section \ref{sec:num_burgers}.

\subsection{Matrix approximation} \label{sec:num_matrix}

As first example, we consider the problem of approximating a matrix $A(t) \in \RR^{100 \times 100}$ on the time interval $[0,1]$. This matrix is given explicitly as 
$$
A(t) =e^{t W_1} De^t e^{tW_2}, 
$$
where $D$ is a diagonal matrix in $\RR^{100 \times 100}$ with non zero coefficients $d_{ii} = 2^{-i}, i\le 10$, and $W_1,W_2 \in $ are two random skew symmetric matrices.
Then, $A(t)$ has a rank $10$ whose non zero singular values are $\sigma_i(t) = e^t 2^{-i}, i \le 10$. \\

The two algorithms are first applied with numerical flux taken equal to 
$
\Delta t F(Z_i,t^{k-1}) = A(t^k)-A(t^{k-1}),  
$
where $Z_i$ stands for the approximation at Step $i$ of the iteration $k$ of the splitting algorithms. 
In what follows, we study the behaviour of the approximation error $e^k= \|A(t^k) - Z^k\|$ at iteration $k$.
{Table \ref{tab:error_evol_DA} gives the values of $\max_k e^k$ for the two methods for $r \in\{10,20\}$. We observe that both methods are exact for $r =10$, which is the rank of the exact solution. In case of over-approximation with $r=20$, the two methods are robust and again provide exact approximation (up to machine precision). 
%\begin{figure}[H]
%\centering 
%\includegraphics[height=5cm]{./}\\
%\caption{Matrix approximation with $\Delta t F(Z_i,t^{k-1}) = A(t^{k})-A(t^{k-1})$: evolution of $e^k$ for $\Delta t = 5 ~ 10^{-3}$, $r\in \{10,20\}$.
%\label{fig:error_evol_DA}}
%\end{figure}
\begin{table}[H]
\centering 
\begin{tabular}{c|c|c}
& KSL & chart \\
\hline \hline 
$r=10$  & $4.03.10^{-15}$& $5.22 .10^{-15}$ \\
$r=20$ & $5.36.10^{-15}$ & $3.77.10^{-15}$
\end{tabular}
\caption{Matrix approximation with $\Delta t F(Z_i,t^{k-1}) = A(t^{k})-A(t^{k-1})$: maximum over $k$ of $e^k$ for $\Delta t = 5 ~ 10^{-3}$, $r\in \{10,20\}$.
\label{tab:error_evol_DA}}
\end{table}}

Similar experiments are performed but considering  the exact expression of the matrix derivative with $F(Z_i,t^{k-1}) = \dot A(t^{k-1})$. 
Approximation error evolution is depicted on Figure  \ref{fig:error_evol}.

\begin{figure}[H]
\centering 
\includegraphics[height=5cm]{./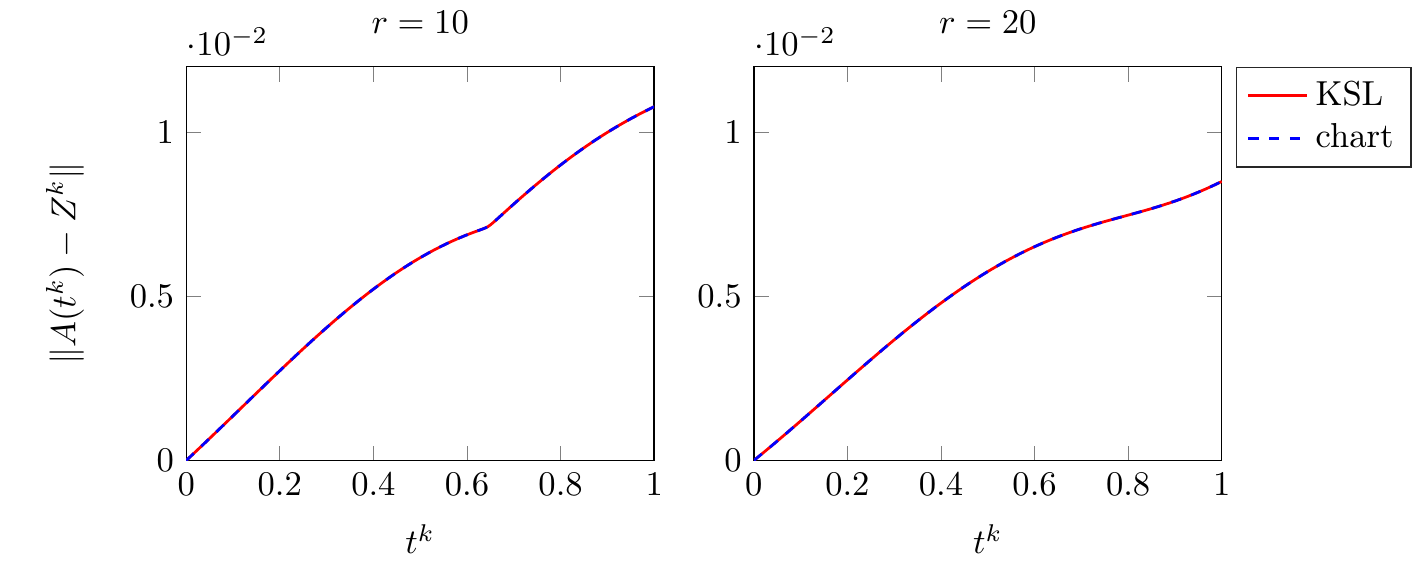}\\
\caption{Matrix approximation with $F(Z_i,t^{k-1}) = \dot A(t^{k-1})$: evolution of $e^k$ for $\Delta t = 5 ~ 10^{-3}$, $r\in \{10,20\}$.
\label{fig:error_evol}}
\end{figure}

Again, both methods give similar results for $r\in \{10,20\}$, as shown on Figure \ref{fig:error_evol} where the error plots coincide either in case of overapproximation. 
However, for this choice of $F(Z_i,t^{k-1})$, both method are no longer exact. Here, the error increases with time up to approximatively $10^{-2}$ with $r=10$ and $10^{-3}$ for $r=20$. 
To quantify this error, we perform some convergence study with respect to $\Delta t$ and $r$. On Figure \ref{fig:errora_rankvsdt_final}, we show the behaviour of the final approximation error 
$e^K=\|A(1)-Z^K\|$ for various rank $r \in \{4,6,8,16,32\}$ and time step $ \Delta t \in \{10^{-5},..., 10^{-1}\}$.

\begin{figure}[H]
\centering 
\includegraphics[height=5cm]{./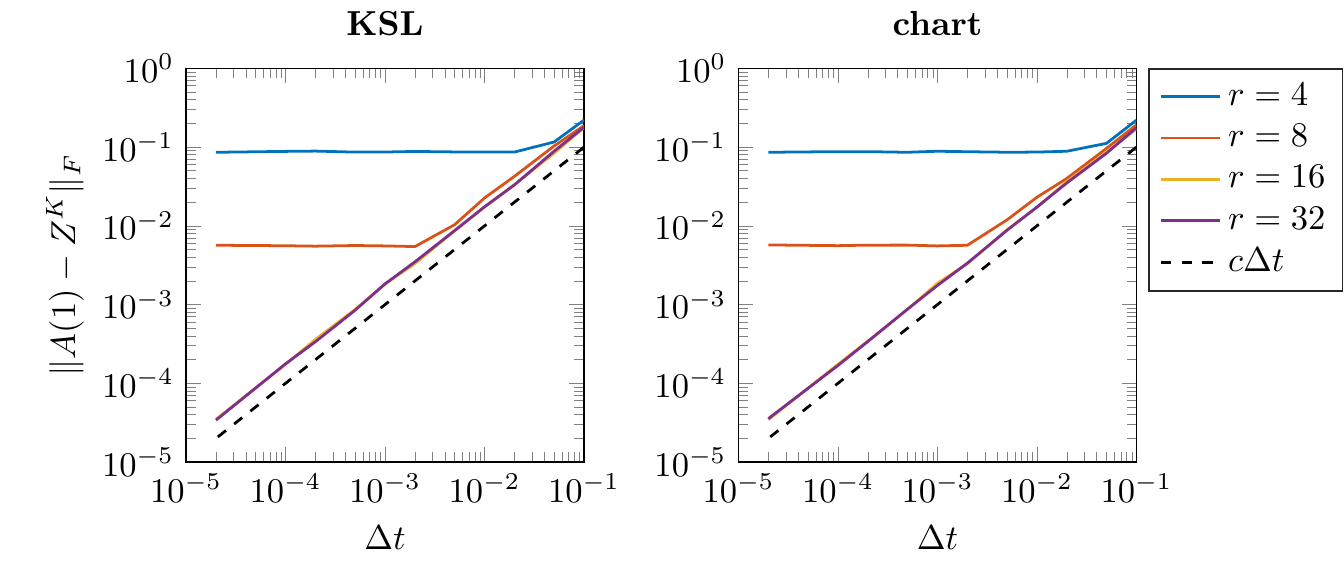}
\caption{Matrix approximation with $F(Z_i,t^{k-1}) = \dot A(t^{k-1})$ : final error $e^K$ for both algorithms for different $(r,\Delta t)$.
\label{fig:errora_rankvsdt_final}}
\end{figure}

Figure \ref{fig:errora_rankvsdt_final} illustrates that the two methods are first order in time as expected. Indeed we observe a linear decreasing of the error up to some stagnation value for $r \le 8$. This stagnation is related to the low-rank approximation error as it decreases when $r$ increases.  

{\subsection{Parameter dependent problem} } \label{sec:num_burgers}

In the lines of \cite{Billaud2017}, we consider the approximation of the parameter-dependent Burger's viscous equation in one dimension. To that goal, let $\Omega\times I = (0,1)\times [0,1]$ be a space time domain. We seek $u(\cdot;\xi)$ the solution of 
\begin{eqnarray}
\partial_t u(x,t;\xi)- \mu(\xi) \partial_{xx}^2 u(x,t;\xi)
+ { u(t,x;\xi) \partial_x u(x,t;\xi)} &=&f(x,t; \xi),\quad \text{ on } \Omega \times I, \label{eq:burgers}
\end{eqnarray}
 with the initial data $u^0(\cdot, \xi): \Omega \to \RR$ and supplemented with  homogeneous Dirichlet boundary conditions. The solution $u(\cdot; \xi)$ depends on the parameter $\xi \in \RR^3 $ through the viscosity $\mu(\xi) = \xi_1 $,  the initial condition and the source term defined by means of the function $f(\cdot,\cdot;\xi) : \Omega \times I \to \RR$ given by
$$
f(x,t;\xi) = \xi_2\exp(-(x-0.2)^2/0.03^2)\sin(\xi_2 \pi t) \boldsymbol{1}_{[0.1,0.3]}(x).                         
$$

The problem \eqref{eq:burgers} is semi-discretized in space by means of finite difference (FD) schemes with $n$ nodes and $m$ instances of the parameter $\xi$ such that we get the following dynamical system 
\begin{equation}
\dot X(t) = L X(t) +  {h(X,t)}, \qquad X(0) = 0, 
\label{eq:appli1}
\end{equation}
where the solution $X(t)$ is a matrix in $ \RR^{n \times m}$. The tensorized operator ${L = D_x\otimes M_\xi}$ is defined by means of 
$D_x \in \RR^{n \times n}$ the  discrete Laplacian obtained by second order centered FD scheme and $M_\xi \in \RR^{m \times m}$ a diagonal matrix whose non-zero coefficients are  the $m$ instances of $\xi_1$.
Moreover, we define the matrix valued function $h$  with entries $[h(Z,t)]_{ij} = Z_{ij}[C_x Z]_{ij} + f(x_j,t; \xi_i)$ where $C_x \in \RR^{n \times n}$ is the discrete version of the first derivative obtained by 1st order centered FD scheme.\\

We first confront the KSL and chart based algorithms for solving the discrete parameter dependent Burger's equation when one single parameter varies. Then, the multiple varying parameter problem is studied.\\

\subsubsection{Single parameter Burger's problem}
   
In this section, the parameter $\xi_1$ takes its values in $[0.01, 0.06]$ while the others are fixed to $\xi_2=4$ and $\xi_3 =0$. 
We chose the following initial condition 
$$
u^0(x;\xi)=  sin(x\xi_1) e^{-100(x-10\xi_1)^2}. 
$$
and set $n =100$ and $m=60$. For solving the matrix ODE given by Equation \eqref{eq:appli1}, we first confront the chart based and KSL algorithms for various ranks, fixing $\Delta t = 10^{-4}$. The approximations obtained are compared to a reference solution noted $\{X_{ref}^k\}_{k=0}^K$ computed with an  explicit Euler scheme with $\Delta t = 5.10^{-6}$. The time step $\Delta t$ is setting small enough to ensure numerical stability  since an explicit integration scheme is applied.\\

Figure \ref{fig:profile_chart} illustrates the behaviour of the numerical solution obtained with the chart based algorithm at final time $t=1$, and $r=20$ for two different instances of the parameter $\xi$.
\begin{figure}[H]
\centering 
\begin{tabular}{ccc}
\includegraphics[height=3.5cm]{./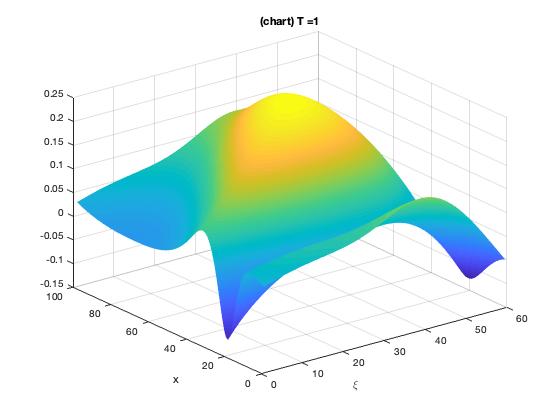} &
\includegraphics[height=3.5cm]{./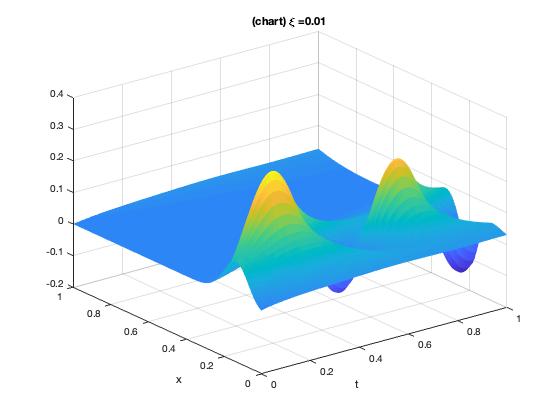}&
\includegraphics[height=3.5cm]{./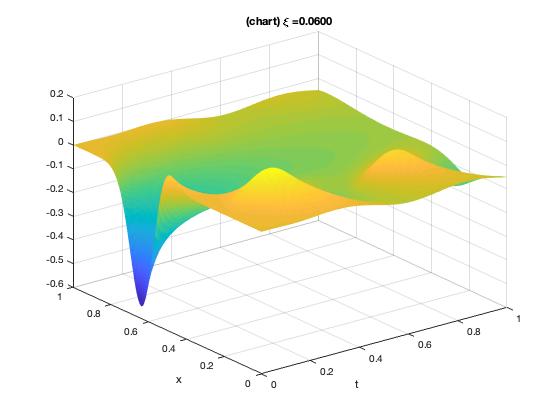}\\
$t=1$ & $\xi_1= 0.01$ & $\xi_1= 0.06$ 
\end{tabular}
\caption{Burgers's equation: approximation for the chart method with $r=20$ and $\Delta t = 10^{-4}$, at final time $t=1$ (left), for $\xi_1=0.01$ (center) and $\xi_1=0.06$ (right).  \label{fig:profile_chart}}
\end{figure}

As we can observe on Figure \eqref{fig:profile_final}, the approximations computed with the two methods are in good agreement with the reference solution.
\begin{figure}[H]
\centering 
\includegraphics[height=5cm]{./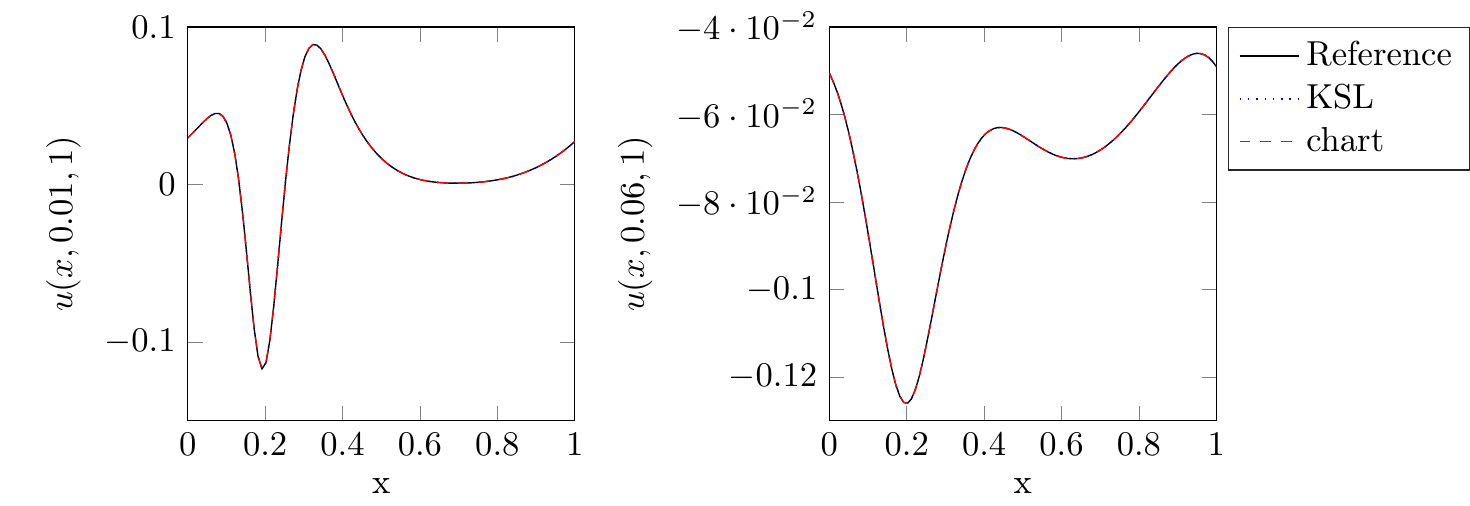}
\caption{Burgers's equation: approximation for the chart based and KSL method compared to the reference solution at final time for $r=20$ and $\Delta t = 10^{-4}$ . \label{fig:profile_final}}
\end{figure}

{
Now, we compare the approximation error to the reference noted $e^k = \|X_{ref}^k-Z^k\|$ for the two splitting algorithms. On Figure \ref{fig:chart_vs_KSL_nl_errora_modif_100_toref}, the evolution of $e^k$ for both algorithms is studied for different ranks. As we can observe, both methods seem to provide an approximation with similar accuracy, except for ranks $15$ and $20$ where the chart based algorithm provides a slightly more accurate approximation than the KSL algorithm. Note that the error does not necessary increase with respect to time as classically observed for DLR approximation methods {which is related to the particular problem considered here}. \\
}

\begin{figure}[H]
\centering 
\includegraphics[scale=1]{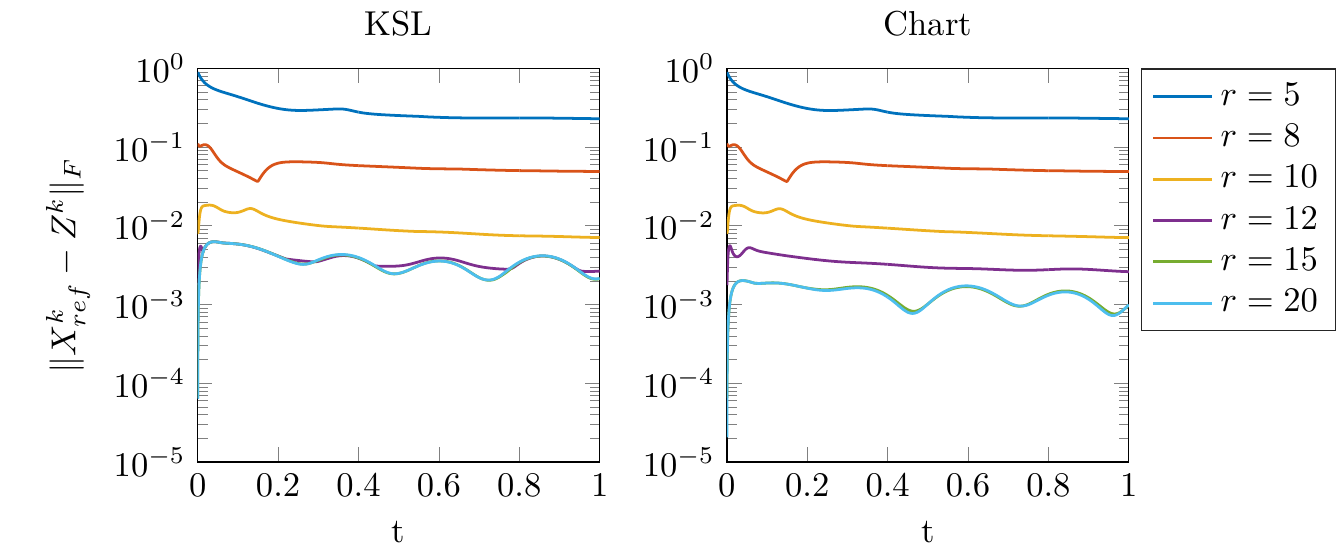}
\caption{Burgers's equation: evolution of $e^k$ for both algorithms for  {$r \in \{5,8,10,12,15,20\}$}. \label{fig:chart_vs_KSL_nl_errora_modif_100_toref}}
\end{figure}

{This observation is confirmed by Figure \ref{fig:errora_rankvsdt_final_100_toref} where  we analyze the convergence with respect to rank and time step. As we  can see, the error $e^K = \|X_{ref}^K-Z^K\|$ decreases with respect to the rank and time step, and  is the smallest (by up to one order of magnitude) for the chart based algorithm for larger rank. }

\begin{figure}[H]
\centering \includegraphics[scale=1]{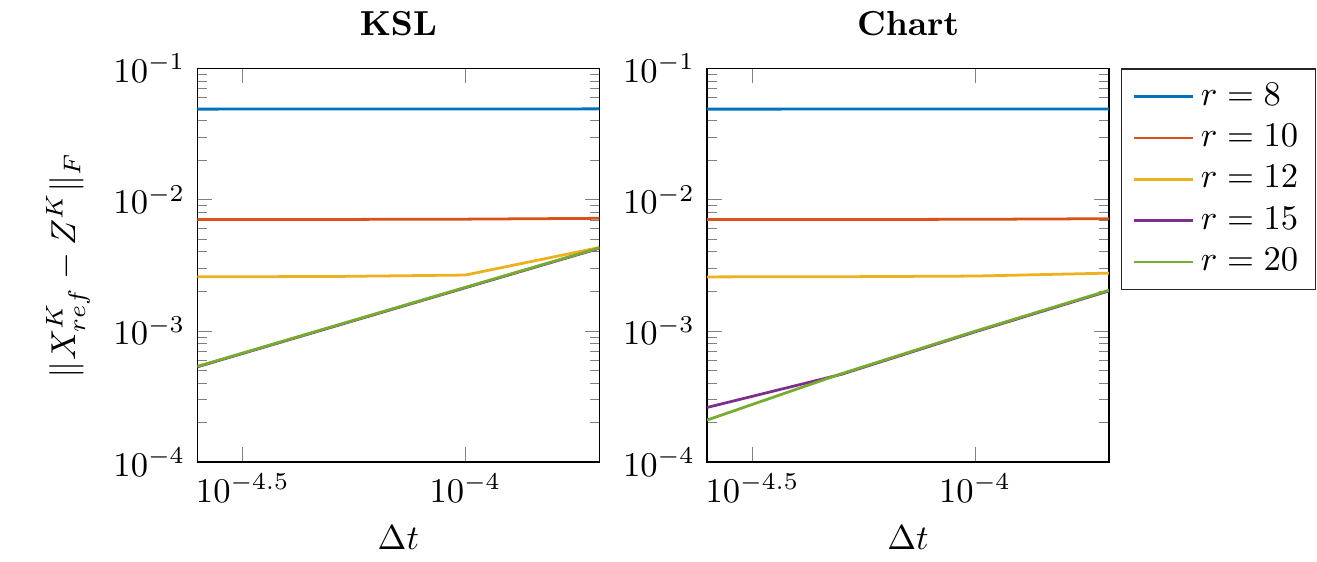}
\caption{Burgers's equation:  error $e^K$ at final time for different $(r,\Delta t)$. \label{fig:errora_rankvsdt_final_100_toref}}
\end{figure}

\subsubsection{Multiple varying parameters Burger's problem}

To conclude this section, we illustrate the behavior of the two methods for the case where $\xi$ is a vector of independent random parameters uniformly distributed on $[0.01,0.06]\times [2,4] \times [0.01,0.1]$.Here, the initial condition is
$$
u^0(x;\xi)=  \xi_3 e^{-100(x-10\xi_2)^2}. 
$$ 
The numerical simulations  are performed for $m=60$ and $n=100$. Here, the two KSL and Chart methods are run with $\Delta t = 10^{-4}$ and compared to true numerical reference solution $\{X_{ref}^k\}_{k=0}^K$  obtained with the explicit Euler scheme with the same time step.
The approximation of numerical solution for the Burger's problem computed with the Chart method with $r=20$ is plotted for two distinct realizations of the parameter $\xi$ showing different features with respect to parameter.
\begin{figure}[H]
\centering 
\begin{tabular}{ccc}
\includegraphics[height=4cm]{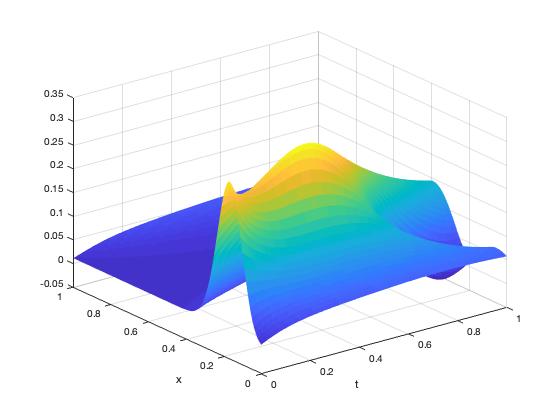}&
\includegraphics[height=4cm]{./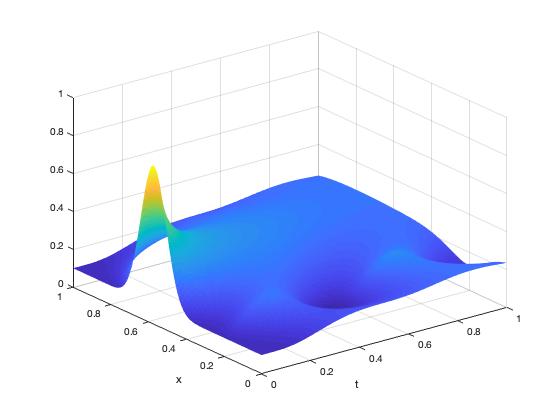}
\end{tabular}
\caption{Burgers's equation: approximation for the chart method with $r=20$ and $\Delta t = 10^{-4}$ for two instances of $\xi$.  \label{fig:profile_nl2}}
\end{figure}
We represent on Figure \ref{fig:multiple_final_errora_vs_rank} the approximation error at final time for the three approaches with respect to $r\in \{5,10,12,15, 20,25,30\}$. We clearly observe that the chart  method provides a better approximation than KSL  for which the error seems to stagnate after $r\ge 12$.
\begin{figure}[H]
\centering \includegraphics[scale=1]{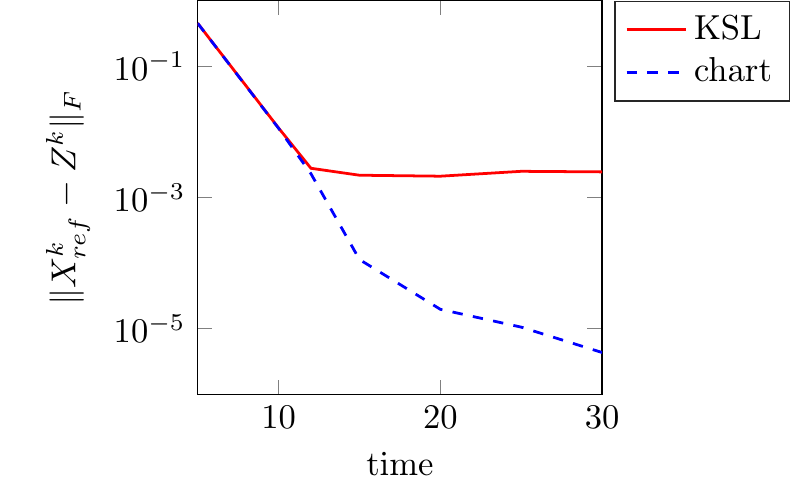}
\caption{Burgers's equation (multiple parameter case): error $e^K$ at final time for different $r$. \label{fig:multiple_final_errora_vs_rank}}
\end{figure}

\section{Conclusion}

In this paper, we have introduced and compared some geometry based algorithms for dynamical low-rank approximation. Using a different geometry description of the set of fixed rank matrices relying on charts, we generalized the description of \cite{Koch2007}. {Then, from this description we derive a new splitting algorithm motivated by fibre bundle structure of the manifold of fixed rank matrices. The resulting algorithm is proved to coincide with the KSL algorithm \cite{Lubich2014} in the particular case of matrix approximation.} Nevertheless, for more general problems arising from the semi-discretization of parameter-dependent non-linear PDEs, the chart based algorithm seems to outperform the KSL algorithm in some situation. 
%The  properties of the proposed scheme were studied in the particular case of  matrix approximation in which it inherits from  the properties of the KSL algorithm. 
Further work should be conducted for derivation of rigorous error bounds in more general cases as well as high order extension using e.g. Strang splitting variant. {Moreover, the proposed splitting scheme is a first step towards designing new algorithms integrating the geometric structure of the manifold of fixed rank matrices, by working in neighborhoods. Especially, deriving a numerical scheme working alternatively in the horizontal and vertical  tangent spaces (as pointed out in Remark \ref{rem:HV}) is the object of future research.
}

\appendix
\section{Chart based splitting integrator} \label{A:1}

{ Following the same lines as in  \cite{Lubich2014}, we justify how the chart based method introduced in Section \ref{sec:chart_splitting} can be interpreted as a splitting scheme relying on the projection decomposition
%To that goal, we assume that the flux is not dependent on the matrix $Z$ i.e. $F(Z(t),t):=F(t)$. \\
\eqref{eq:splitting_op_chart} as the sum of three contributions $P_{T_Z(t)}= P_1+ P_2 + P_3 $. One integration  step of the splitting method starting from $t_0$ to $t_1$  with initial guess $Z(t_0) = U(t_0)G(t_0)V(t_0)^T$ proceeds as follows. 
{\begin{enumerate}
\item[(S1)] Find $Z \in {\cal U}_{Z(t_0)}$ on $[t_0,t_1]$ such that $\dot Z =  P_{U} F(Z) P_{V}^T$ with initial condition $Z(t_0)$.
\item[(S2)] Find $Z \in {\cal U}_{Z(t_0)}$  on $[t_0,t_1]$ such that  $\dot Z =  P_{U}^\perp F(Z) P_{V}^T$ with initial condition given by final condition of step (S1).
\item[(S3)] Find $Z \in {\cal U}_{Z(t_0)}$  on $[t_0,t_1]$ such that  $\dot Z  =  P_{U} F(Z) (P^\perp_{V})^T $ with initial condition given by final condition of step (S2).
\end{enumerate}
}
}

{
At each step (S$i$) of the splitting, $Z$ belongs to the neighborhood  of $Z(t_0)$. Thus  it is given by $Z(t) = U(t)H(t)V(t)^T$ with $U(t) = U(t_0)+U(t_0)_\perp X(t)$, $Y(t)= V(t_0)+V(t_0)_\perp Y(t)$ provided by the ODE solved at Step $i$ of the chart based splitting, as stated in the following proposition.
\begin{proposition}
The solution of (S1) is given by $Z $ with 
\begin{equation}
\dot H= U^+ F (Z)V^T, \quad  \dot X =0,\quad  \dot Y = 0.
\label{eq:H}
\end{equation}
with $H(t_0)=G(t_0)$, $X(t_0)=0$ and $Y(t_0)=0$.  Set 
\item Letting $H_1$ be the final condition of $H$ from (S1), the solution of (S2) is given by $Z$ with
\begin{equation}
\dot X H =  {U}_\perp^+ F (Z)(V^+)^T, \quad \dot Y= 0,\quad  \dot H=0,
\label{eq:X}
\end{equation}
with $H(t_0)=H_1$, $X(t_0)=0$ and $Y(t_0)=0$. Set $X_1 = X(t_1)$.  
\item  Letting $X_1$ be the final condition of $X$ from (S2), the solution of (S3) is given by $Z$ with
\begin{equation}
\dot Y H^T = {V}_\perp^+ F (Z) (U^+)^T,  \quad \dot X= 0,\quad \dot H=0,
\end{equation}
with $H(t_0)=H_1$, $X(t_0)=X_1$ and $Y(t_0)=0$.
\end{proposition}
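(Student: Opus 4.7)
The approach is to verify directly, for each sub-step (S$i$), that the parametrized ansatz $Z(t) = U(t)H(t)V(t)^T$ prescribed in the proposition indeed solves the split ODE $\dot Z = P_i F(Z)$ with the stated initial data. By Proposition~\ref{prop:wellposed} and standard Picard--Lindelöf theory, each of these sub-ODEs has a unique local solution, so exhibiting one of the required form is enough. Throughout each sub-step I would work in the fixed chart $\theta_{Z(t_0)}$, treating $U(t_0), V(t_0), U(t_0)_\perp, V(t_0)_\perp$ and the associated projections as frozen data; the chart structure of Section~\ref{sec:geom-description1} guarantees that as long as $Z$ stays in $\mathcal{U}_{Z(t_0)}$ it admits a unique factorisation of the prescribed form.

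For (S1): since $\dot X = \dot Y = 0$, the matrices $U(t)$ and $V(t)$ stay equal to $U(t_0)$ and $V(t_0)$, so $\dot Z = U\dot H V^T$. I would compute $P_U F(Z) P_V^T = UU^+ F(Z) (VV^+)^T$ and, multiplying on the left by $U^+$ and on the right by $(V^+)^T$ and using $U^+U = I_r$, $V^+V = I_r$, read off $\dot H = U^+ F(Z)(V^+)^T$. The initial condition $H(t_0) = G(t_0)$ is read off directly from $Z(t_0) = U(t_0)G(t_0)V(t_0)^T$.

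For (S2): here $\dot H = 0$ and $\dot Y = 0$, so $V$ and $H$ are frozen at $V(t_0)$ and $H_1$, while $U(t) = U(t_0) + U(t_0)_\perp X(t)$ varies through $X$. Differentiating the ansatz gives $\dot Z = U(t_0)_\perp \dot X H_1 V^T$. The frozen projection $P_U^\perp F(Z) P_V^T$ equals $U(t_0)_\perp U(t_0)_\perp^+ F(Z) V V^+$; equating the two expressions and using $U(t_0)_\perp^+ U(t_0)_\perp = I$ together with $VV^+V^{+T} = V^{+T}$ (since $V$ has full column rank) yields precisely $\dot X H_1 = U(t_0)_\perp^+ F(Z)(V^+)^T$, which is the stated ODE with $H = H_1$. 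Step (S3) is treated analogously after transposition: $U$ and $H$ are frozen at $U_1$ and $H_1$, $V(t) = V(t_0) + V(t_0)_\perp Y(t)$, and matching $\dot Z = U_1 H_1 (V(t_0)_\perp \dot Y)^T$ with $P_U F(Z)(P_V^\perp)^T$ recovers $\dot Y H_1^T = V(t_0)_\perp^+ F(Z)^T (U_1^+)^T$.

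The main subtlety, and the step I expect to be the principal obstacle to formalise cleanly, is the interpretation of the projections $P_U, P_V, P_U^\perp, P_V^\perp$ inside each sub-step as being computed from the \emph{base} of the chart $\theta_{Z(t_0)}$ rather than from the evolving $U(t), V(t)$. For instance, in (S2), once $X(t) \neq 0$ the range of $I - U(t)U(t)^+$ genuinely differs from $\mathrm{range}(U(t_0)_\perp)$, so the decomposition \eqref{eq:splitting_op_chart} has to be read as a decomposition of the tangent projection at the chart centre, frozen during the sub-step. Once this convention is made explicit, the three verifications reduce to the pseudo-inverse manipulations sketched above, and the proposition follows.
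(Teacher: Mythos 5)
Your proposal is correct and follows essentially the same route as the paper's own proof: differentiate the chart parametrization $Z=(U(t_0)+U(t_0)_\perp X)H(V(t_0)+V(t_0)_\perp Y)^T$ in each sub-step and identify the parameter ODEs by multiplying the projected equation on the left and right by the appropriate pseudo-inverses. Your explicit remark that the projections $P_U^\perp$ and $P_V^\perp$ in (S2) and (S3) must be read as frozen at the chart centre (since once $X\neq 0$ the range of $I-U(t)U(t)^+$ no longer coincides with $\mathrm{range}(U(t_0)_\perp)$) is a correct and welcome clarification of a convention the paper leaves implicit.
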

\begin{proof}
For each step (S$i$), $Z$ admits the decomposition 
$$
Z = (U(t_0)+U(t_0)_\perp X) H(V(t_0)+V(t_0)_\perp Y)^T
$$
 with derivative
$$
\dot Z =U(t_0)_{\perp} \dot X H(V(t_0)+V(t_0)_{\perp}Y)^T + (U(t_0)+U(t_0)_{\perp}X) \dot H(V(t_0)+ V(t_0)_\perp Y)^T
$$
$$
+ (U(t_0)+U(t_0)_{\perp}X) H(V(t_0)_{\perp}\dot Y)^T. 
$$
For (S1), the derivative satisfies $\dot Z = P_{U} F(Z) P_{V}^T$. Then, multiplying on the left by $U(t_0)^+$ and on the right by $(V(t_0)^+)^T$ the matrix $\dot Z$ in both expressions leads to
$\dot H= U^+ F(Z) (V^+)^T$ and $\dot X =0, \dot Y = 0.$\\
Now let us turn to (S2). The 
%solution  can be written as 
%$$
%Z =  (U(t_0)+U(t_0)_\perp X) H(V(t_0)+V(t_0)_\perp Y)^T
%$$
% with 
 derivative
%$$
%\dot Z_2 = \frac{d}{dt}(U_1H_2 + U_{1,\perp}X_2 H_2)(V_1+V_{1,\perp}Y_2)^T + (U_1H_2 + U_{1,\perp}X_2 H_2) (V_{1,\perp}\dot Y_2)^T ,
%$$
%which
satisfies $\dot Z = P_{U}^\perp F(Z) P_{V}^T$. By multiplying on the right by $(V(t_0)^+)^T$, the equality is satisfied if 
$\dot X H =  U_\perp^+ F(Z) (V^+)^T$ and $ \dot Y= 0, \dot H =0.$ The third point of the lemma is obtained from (S3) in the same manner, by multiplying the equation $\dot Z = P_{U} F(Z) (P_{V}^\perp)^T$ on the left by $(U(t_0)+U(t_0)_\perp X_1)^+$ and setting $\dot X =0$, $\dot H=0$. 
\end{proof}
}

%\begin{remark}
%As mentioned in Remark \ref{rem:matrix_perp}, there is no need to compute the matrices $U_{i,\perp},V_{i,\perp}$ explicitly. 
%They are only introduced for convenient description of ${\cal U}_{Z_1}$ and ${\cal U}_{Z_2}$ for $(S2)$ and $(S3)$ respectively.
%\end{remark}

%%%%%%%%%%%%%%%%%%%%%%%%%%%%%%%%%%%%%%%%%%%%%%%%%%
% BIBLIO %%%%%%%%%%%%%%%%%%%%%%%%%%%%%%%%%%%%%%%%%%%%%
%%%%%%%%%%%%%%%%%%%%%%%%%%%%%%%%%%%%%%%%%%%%%%%%%%

\end{document}